\documentclass[preprint,times]{elsarticle}
\usepackage{lineno}

\journal{Elsevier}

\usepackage{stmaryrd}
\usepackage{amsmath}
\usepackage{amssymb}
\usepackage{amsthm}
\usepackage{graphicx}
\usepackage{epic,eepic,epsfig}
\usepackage{color}
\usepackage{subfigure}  
\usepackage{placeins}   
\usepackage{multirow}
 
\usepackage{varwidth}
\usepackage{float}

\usepackage{hyperref} 
\hypersetup{
	colorlinks=true, 
	urlcolor=black, 
	linkcolor=black, 
	citecolor=black,
}

\usepackage{algorithm,algorithmic}
\makeatletter
\newenvironment{breakablealgorithm}
{
		\begin{center}
			\refstepcounter{algorithm}
			\hrule height.8pt depth0pt \kern2pt
			\renewcommand{\caption}[2][\relax]{
				{\raggedright\textbf{\ALG@name~\thealgorithm} ##2\par}%
				\ifx\relax##1\relax 
				\addcontentsline{loa}{algorithm}{\protect\numberline{\thealgorithm}##2}%
				\else 
				\addcontentsline{loa}{algorithm}{\protect\numberline{\thealgorithm}##1}%
				\fi
				\kern2pt\hrule\kern2pt
			}
		}{
		\kern2pt\hrule\relax
	\end{center}
}
\makeatother

\newtheorem{lemma}{Lemma}[section]
\newtheorem{theorem}{Theorem}[section]

\newtheorem{example}{Example}[section] 
\newtheorem{remark}{Remark}[section]
\newtheorem{corollary}{Corollary}[section]

\numberwithin{algorithm}{section}
\numberwithin{equation}{section}

\usepackage{lineno} 

\allowdisplaybreaks[4]

\newcommand{\rel}{\mathrm{rel}}

\newcommand{\est}{\mathrm{est}}
\newcommand{\rem}{\mathrm{rem}}
\newcommand{\reduce}{\mathrm{red}}
\newcommand{\ctr}{\mathrm{ctr}}
\newcommand{\conv}{\mathrm{conv}}
\newcommand{\qoo}{\mathrm{qo}}

\newcommand{\anorm}[1]{\|#1\|_a}
\newcommand{\veps}{\varepsilon}
\newcommand{\vect}[1]{\boldsymbol{#1}}
\newcommand{\vn}{\boldsymbol{n}}
\newcommand{\vx}{\boldsymbol{x}}
\newcommand{\valpha}{\boldsymbol{\alpha}}
\newcommand{\vbeta}{\boldsymbol{\beta}}
\newcommand{\ha}{\hat{a}}
\newcommand{\hu}{\hat{u}}
\newcommand{\hmcP}{\hat{\mathcal{P}}}
\newcommand{\hmcL}{\hat{\mathcal{L}}}
\newcommand{\homega}{\hat{\omega}}

\newcommand{\tu}{\tilde{u}}
\newcommand{\wtC}{\widetilde{C}}
 
\newcommand{\mcE}{\mathcal{E}}

\newcommand{\mcL}{\mathcal{L}}
\newcommand{\mcM}{\mathcal{M}}
\newcommand{\mcN}{\mathcal{N}}
\newcommand{\mcT}{\mathcal{T}}
 
\newcommand{\mbI}{\mathbb{I}}
\newcommand{\mbR}{\mathbb{R}}
\newcommand{\mbV}{\mathbb{V}}
 
\newcommand{\Ltw}[1]{\|#1\|_0}
\newcommand{\Hon}[1]{\|#1\|_1}

\newcommand{\LtwD}[2]{\|#1\|_{0,#2}}

\newcommand{\LtwT}[1]{\|#1\|_{0,T}}

\newcommand{\LtwE}[1]{\|#1\|_{0,E}}
\newcommand{\mrd}{\mathrm{d}}

\def\leq{\leqslant}
\def\geq{\geqslant}

\DeclareMathOperator*{\esssup}{ess\,sup}

\begin{document}

\begin{frontmatter}
	
\title{A correction adaptive two-grid finite element method for nonselfadjoint or indefinite elliptic problems}

\author[LNSF]{Fei Li}
\ead{lifei@lingnan.edu.cn}

\author[MST]{Qingguo Hong}
\ead{qingguohong@mst.edu}

\author[GZU]{Ming Tang}
\ead{tangming@gzhu.edu.cn}

\author[SCNU]{Liuqiang Zhong\corref{cor}}
\ead{zhong@scnu.edu.cn}

\cortext[cor]{Corresponding author}
\address[LNSF]{School of Mathematics and Statistics, Lingnan Normal University, Zhanjiang, Guangdong, 524048, China}
\address[MST]{Department of Mathematics and Statistics, Missouri University of Science and Technology, Rolla, MO 65409, USA}
\address[GZU]{School of Mathematics and Information Science, Guangzhou University, Guangzhou, Guangdong, 510405, China}
\address[SCNU]{School of Mathematical Sciences, South China Normal University, Guangzhou, Guangdong, 510631, China}

\begin{abstract}
We propose, analyze, and numerically validate a correction adaptive two-grid finite element method (CATGFEM) for nonselfadjoint or indefinite elliptic problems. In contrast to the adaptive two-grid finite element method (ATGFEM) of Li and Zhang [SIAM J. Sci. Comput., 43 (2021), pp. A908–A928], which is restricted to symmetric positive-definite problems, the proposed method introduces an additional correction step that solves a small-scale discrete residual problem on the coarse mesh. This step entails negligible additional computational cost and allows us to show that the $L^2$-norm error of the corrected discrete solution is a higher-order of the energy-norm error of the discrete solution. Using this result, we prove a contraction property for a suitable sum of quasi-errors on two successive adaptive meshes and establish convergence of the method. Numerical experiments illustrate the improved effectiveness and robustness of our method in comparison with ATGFEM.
 \end{abstract}

\begin{keyword}
	Correction adaptive two-grid finite element method, contraction and convergence, nonselfadjoint or indefinite elliptic problem.
\end{keyword}

\end{frontmatter}


\section{Introduction}\label{sect1}
Nonselfadjoint or indefinite elliptic problems constitute a fundamental class of model problems in computational science and engineering. Representative examples include convection–diffusion–reaction equations \cite{DuSHXieXP15:1327,ZhongLQXuanY22:113903,AbdulleSouza22:110894} and the Helmholtz equation \cite{DuanSYWuHJ23:21,IrimieBouillard01:4027,DuYWuHJ15:782}. In many applications, the solutions of such problems may exhibit singular behavior caused by nontrivial domain geometries or nonsmooth coefficients and source terms \cite{Grisvard92Book,Mitchell13:350}. The adaptive finite element method (AFEM) \cite{Verfurth96book,verfurth13Book,AinsworthOden00Book} is one of the most effective approaches for treating elliptic problems with solution singularities. AFEM constructs computable a posteriori error estimators based on the discrete solution and known problem data, identifies elements with large local error contributions associated with singular behavior, and locally refines these elements to generate improved meshes. Repeating this adaptive loop balances discretization errors between singular and nonsingular regions, thereby yielding quasi-optimal meshes and accurate numerical solutions.

In many existing analyses of AFEM for nonselfadjoint or indefinite elliptic problems, see, for example, \cite{NochettoSiebert09:409,BespalovHaberl17:318,FeischlFuhrer14:601,MekchayNochetto05:1803}, the availability of the exact discrete solution is assumed, although its computation is in general difficult \cite{XuJC92:303}. It is well known that symmetric positive-definite (SPD) problems are typically easier to solve than nonselfadjoint or indefinite problems, and two-grid methods provide a classical strategy for transforming the latter into the former \cite{XuJC96:1759}.
Li and Zhang \cite{LiYkZhangY21:A908} incorporated the two-grid idea into adaptivity and proposed the adaptive two-grid finite element method (ATGFEM). In this approach, the 
$(k-1)$th and $k$th adaptive meshes are regarded as the coarse and fine grids, respectively, in contrast to the method proposed in \cite{BiCJWangC18:23}, which additionally introduces a uniform coarse grid at each adaptive level. Using the discrete solution on the $(k-1)$th mesh, ATGFEM constructs and solves only an SPD problem on the 
$k$th mesh.

It should be noted that the convergence result for ATGFEM in \cite{LiYkZhangY21:A908} holds up to certain $L^2$-norm errors of the discrete solutions, which are empirically treated as higher-order quantities of the corresponding energy-norm errors, under the assumption that the initial mesh is sufficiently fine. However, this key observation is not theoretically justified in \cite{LiYkZhangY21:A908}, and therefore the convergence analysis is not fully rigorous. Moreover, our numerical investigations indicate that the performance of ATGFEM may deteriorate in certain regimes, such as problems with small negative reaction coefficients or when large marking parameters are used.

For nonselfadjoint or indefinite elliptic problems, the main differences and contributions of the present work, in comparison with \cite{LiYkZhangY21:A908}, are summarized as follows:
\begin{enumerate}
\item We introduce an additional correction step by solving a discrete residual problem on the initial mesh, whose solution is used to correct the discrete solution obtained on the previous adaptive mesh. Consequently, starting from the second adaptive step, the proposed correction adaptive two-grid finite element method (CATGFEM, Algorithm~\ref{alg-AFEMC}) behaves as an iterative two-grid scheme, with the fixed initial mesh serving as the coarse grid and the adaptively refined mesh as the fine grid.
\item We rigorously prove that a suitable sum of quasi-errors on two successive adaptive levels is contractive (Theorem~\ref{thm-contract}), which yields convergence of the CATGFEM (Corollary~\ref{cor-convergence}). The contraction proof relies crucially on a new auxiliary result showing that the $L^2$-norm error of the corrected discrete solution is a higher-order-order of the energy-norm error of the discrete solution (Lemma~\ref{lem-L2-lifting}).
\item Numerical experiments confirm the theoretical results and demonstrate that the proposed CATGFEM exhibits improved effectiveness and greater robustness with respect to the marking parameter when compared with ATGFEM.
\end{enumerate}
The remainder of this paper is organized as follows. Section~\ref{part-preliminary} introduces preliminaries, and Section~\ref{part-algorithm} presents the CATGFEM. Section~\ref{sect3} establishes the key $L^2$-error estimate and develops the contraction and convergence analysis. Numerical experiments illustrating the effectiveness and robustness of the proposed method are reported in Section~\ref{sect-test}.

\section{Preliminaries}\label{part-preliminary}
In this section, we introduce some notations, the nonselfadjoint or indefinite elliptic model problem, and the finite element discretization. 

\subsection{Notation}
Let $\Omega \subset \mbR^d \; (d = 2, \, 3)$ be a simply connected bounded open polygonal or polyhedral domain with Lipschitz boundary $\partial \Omega$. For any set $S \subset \Omega$ and any $1 \leq p \leq \infty$, 
we denote by $L^p(S)$ the standard Lebesgue space equipped with the norm 
\begin{align*}
	\|v\|_{0,p,S} := \left(\int_S |v|^p \mrd \vx \right)^\frac{1}{p}, \; p \in (1,\infty), \quad \text{or} \quad \|v\|_{0,\infty,S}:=\esssup\limits_{\vx \in S}|v(\vx)|. 
\end{align*}
Specially, we write $\|v\|_{0,2,S}$ to be $\|v\|_{0,S}$ for any $v \in L^2(S)$. We also denote the $L^2(S)$ inner product by 
\begin{equation*}
	(v,w)_S := \int_S vw \mrd \vx, \quad \forall \; v,w \in L^2(S). 
\end{equation*}

For any integer $m > 0$, we denote by $W^{m,p}(S)$ the standard Sobolev space equipped with the norm 
\begin{equation*}
	\|v\|_{m,p,S} := \left(\sum_{|j| \leq m}\|D^j v\|^p_{0,p,S}\right)^\frac{1}{p}, \; p \in (1,\infty), \quad \text{or} \quad \|v\|_{m,\infty,S}:=\max_{|j| \leq m}\|D^j v\|_{0,\infty,S}, 
\end{equation*}
where $D^j v$ is the $j$-th weak partial derivative. 
In particular, we write $H^m(S)$  instead of $W^{m,2}(S)$ and write  $\|v\|_{m,S}$  instead of $\|v\|_{m,2,S}$, respectively. Moreover, note that if $S = \Omega$, the subscript $S$ will be dropped in the norm and the inner product. 

The space $H_0^1(\Omega)$ consists of all functions $v \in H^1(\Omega)$ with zero traces, and $H^{-1}(\Omega)$ denotes the dual space of $H_0^1(\Omega)$ with the norm 
	\begin{equation}\label{def-negative-norm}
		\|g\|_{-1} := \sup_{v \in H_0^1(\Omega)}\frac{\langle g,v \rangle}{\|v\|_1}, \quad \forall \; g \in H^{-1}(\Omega), 
	\end{equation}
where $\langle \cdot,\cdot \rangle$ denotes the dual pairing between $H^{-1}(\Omega)$ and $H_0^1(\Omega)$.

The inequality $a\lesssim b$ abbreviates $a\leq Cb$ with some generic constant $C > 0$ independent of the notations $a$ and $b$, the mesh size, and the adaptivity iteration counter mentioned later. Furthermore, $a \simeq b$ denotes $a \lesssim b$ and $b \lesssim a$. We also use $C = C(a,b)$ to indicate that the constant $C$ depends only on the notations $a$ and $b$.

\subsection{Model problem}
Given the source term $f \in H^{-1}(\Omega)$, we consider the following nonselfadjoint or indefinite elliptic boundary-value problem
\begin{alignat}{4}
	\hat{\mcL}u:=-\nabla \cdot(\boldsymbol{\alpha}\nabla u)+\boldsymbol{\beta} \cdot \nabla u+\gamma u  =~&f,  &\quad&\text{in}~\Omega,  \label{model-1} \\
	u  =~&0,&\quad&\text{on}~ \partial \Omega,\label{model-2} 
\end{alignat}
where the matrix function (also called the diffusion coefficient) $\vect{\alpha} \in W^{1,\infty}(\Omega)^{d\times d}$ is SPD with eigenvalues $0 < \mu_0 < \mu_1 < \infty$ such that
	\begin{equation}\label{ellipticity}
		\mu_0|\boldsymbol{\xi}|^2 \leq \boldsymbol{\xi}\valpha(\vx)\boldsymbol{\xi}^T  \leq \mu_1|\boldsymbol{\xi}|^2, \quad \forall \; \boldsymbol{\xi} \in \mathbb{R}^d, ~ \vx \in \Omega, 
	\end{equation}
the vector function (also called the convection coefficient) $\vect{\beta} \in W^{1,\infty}(\Omega)^d$ and the scalar function (also called the reaction coefficient) $\gamma \in L^\infty(\Omega)$. 

The weak formulation of \eqref{model-1}--\eqref{model-2} reads: Find $u \in H_0^1(\Omega)$ such that 
\begin{equation}\label{model-VP}
	\ha(u,v) = \langle f , v \rangle, \quad \forall\; v \in H_0^1(\Omega), 
\end{equation}
where the bilinear form $\ha(\cdot,\cdot)$ is defined by 
\begin{equation}\label{bilinear-ha}
	\ha(v,w) := (\valpha\nabla v,\nabla w) + (\vbeta\cdot \nabla v,w) + (\gamma v,w), \quad
	\forall \; v,w \in H_0^1(\Omega). 
\end{equation}
We suppose that the operator $\hmcL:\; H_0^1(\Omega) \rightarrow H^{-1}(\Omega)$ is an isomorphism, hence the weak formulation \eqref{model-VP} admits a unique solution $u \in H_0^1(\Omega)$.

We define the energy norm as 
	\begin{equation}\label{norm-energy}
		\anorm{v} := \sqrt{a(v,v)}, \quad \text{where} \quad a(v,w) := (\valpha\nabla v,\nabla w), \quad \forall \; v,w \in H_0^1(\Omega). 
	\end{equation}
According to the ellipticity of $\valpha$ in \eqref{ellipticity} and the Poincar\'e inequality, we know that the energy norm $\anorm{\cdot}$ is equivalent to the native norm $\Hon{\cdot}$ on $H_0^1(\Omega)$, i.e., for all $v \in H_0^1(\Omega)$, there holds
\begin{equation}\label{equivalence-norm}
	 \anorm{v} \simeq \Hon{v}. 
\end{equation}
By the assumptions on the model data $\valpha,\,\vbeta$ and $\gamma$, and the norm equivalence \eqref{equivalence-norm}, we obtain the boundedness of the bilinear form $\hat{a}(\cdot,\cdot)$: 
    \begin{equation}\label{ha-boundedness}
		\ha(v,w) \lesssim \anorm{v}\anorm{w}, \quad \forall \; v,w \in H_0^1(\Omega). 
	\end{equation}

Since $\hmcL: \; H_0^1(\Omega) \rightarrow H^{-1}(\Omega)$ is an isomorphism, its dual operator $\hmcL^*: \; H_0^1(\Omega) \rightarrow H^{-1}(\Omega)$ is also an isomorphism, and there both hold the stability estimates (see, e.g., \cite{XuJC96:1759})
	\begin{equation}\label{est-regularity}
		\|v\|_1 \lesssim \|\hmcL v\|_{-1}, \quad \|v\|_1 \lesssim \|\hmcL^* v\|_{-1}, \quad \forall \; v \in H_0^1(\Omega).
	\end{equation}
By the stability estimates \eqref{est-regularity}, the $\|\cdot\|_{-1}$ norm definition \eqref{def-negative-norm} and the norm equivalence \eqref{equivalence-norm}, we obtain 
	\begin{equation}\label{infsup-VP}
		\anorm{v} \lesssim \sup_{w \in H_0^1(\Omega)}\frac{\ha(v,w)}{\anorm{w}}, \quad \anorm{v} \lesssim \sup_{w \in H_0^1(\Omega)}\frac{\ha(w,v)}{\anorm{w}}, \quad \forall \; v \in H_0^1(\Omega). 
	\end{equation}

To discretize the weak formulation \eqref{model-VP}, we consider a conforming and shape-regular simplex partition $\mcT = \{T\}$ of the domain $\Omega$, where the element $T \in \mcT$ is a triangle if $d = 2$, or a tetrahedron if $d = 3$. Associated with the mesh $\mcT$, we consider the continuous piecewise finite element space 
\begin{equation}\label{space-fem}
	\mbV_\mathcal{T}:= \{v_\mathcal{T} \in \mathcal{C}(\Omega): \; v_\mathcal{T}|_T \in \mathbb{P}_\ell(T), \; \text{for any element }  
	T \in \mcT\},  
\end{equation}
where $\mathcal{C}(\Omega)$ is the space of continuous functions on $\Omega$ and $\mathbb{P}_\ell(T)$ consists of all polynomials of total degree less than or equal to $\ell$ on the element $T$. 
The finite element discretization of \eqref{model-VP} reads: Find  $\hu_\mathcal{T} \in \mbV_\mathcal{T}$ such that 
\begin{equation}\label{model-FEM}
	\ha(\hu_\mathcal{T},v_\mathcal{T}) = \langle f,v_\mathcal{T} \rangle, \quad \forall \; v_\mathcal{T} \in \mbV_\mathcal{T}. 
\end{equation}
The existence and uniqueness of the finite element solution of \eqref{model-FEM} can be proved under the condition that the mesh size $h_\mathcal{T}:=\max_{T \in \mcT} h_T$ of the mesh $\mcT$ is sufficiently small \cite{Schatz74:959,SchatzWang96:19}, where $h_T := \mathrm{diam}(T)$ is the diameter of any element $T \in \mcT$. Moreover, note that combining the assumption that the mesh size $h_{\mathcal{T}}$ is sufficiently small with the norm equivalence \eqref{equivalence-norm} yields that  
\begin{equation}\label{infsup-FEM}
	\anorm{v_\mcT} \lesssim \sup_{w_\mcT \in \mbV_\mcT}\frac{\ha(v_\mcT,w_\mcT)}{\anorm{w_\mcT}}, \quad \anorm{v_\mcT} \lesssim \sup_{w_\mcT \in \mbV_\mcT}\frac{\ha(w_\mcT,v_\mcT)}{\anorm{w_\mcT}}, \quad \forall \; v_\mcT \in \mbV_\mcT,  
\end{equation}
one can refer to \cite[Lemma 2.2]{XuJC96:1759}.


\section{Adaptive algorithm}\label{part-algorithm}
The AFEM is based on the loop of the following procedures
\begin{equation}\label{loop}
	\textbf{SOLVE}~\rightarrow~\textbf{ESTIMATE}~\rightarrow~\textbf{MARK}~\rightarrow~\textbf{REFINE}. 
\end{equation}	
We below describe each procedure of loop \eqref{loop} of the proposed CATGFEM (Algorithm \ref{alg-AFEMC}). Particularly in Section \ref{part-solve}, we also detailedly present Procedure \textbf{SOLVE} of the standard AFEM (SAFEM, \cite{NochettoSiebert09:409,FeischlFuhrer14:601}) and the ATGFEM \cite{LiYkZhangY21:A908}, respectively, to emphasize the main idea of our CATGFEM. 

\subsection{Procedure \textbf{SOLVE}}\label{part-solve}
Let $\{\mcT_k\}_{k \geq 0}$ be a mesh family generated by Procedure \textbf{REFINE}; see Section \ref{part-refine} in detail. We write the finite element space $\mbV_{\mcT_k}$ to $\mbV_k$ for simplicity. 

The SAFEM is to solve the discretization \eqref{model-FEM} on each mesh $\mcT_k$: Find $\hu_k \in \mbV_k$ such that 
	\begin{equation}\label{solve-SAFEM}
		\ha(\hu_k,v_k) = \langle f,v_k \rangle, \quad \forall \; v_k \in \mbV_k. 
	\end{equation}
Note that the resulting algebraic system is in general nonselfadjoint or indefinite, and solving such a problem is difficult \cite{XuJC92:303}. Furthermore, numerical computation of \eqref{solve-SAFEM} requires that the initial mesh size $h_0$ is sufficiently small. Thus, these lead to that the computational cost for such a nonselfadjoint or indefinite problem grows expensive as the adaptivity iteration counter $k$ increases. 

It is well-known that solving SPD problems is often much easier than solving nonselfadjoint or indefinite problems, since there are many fast solvers, e.g., multigrid methods \cite{McCormick87Book}, for the resulting algebraic system of the SPD problem. Therefore, the ATGFEM, by applying the two-grid method \cite{XuJC96:1759} to adaptivity, i.e., see the $(k-1)$th and the $k$th meshes as the coarse and fine grids, transforms the nonselfadjoint or indefinite problem into an SPD (approximate) problem to solve. The corresponding solving procedure is stated as follows: 

\begin{enumerate}
	\itemsep = 0pt \parskip = 0pt
	\item On the initial mesh $\mcT_0$, solve the nonselfadjoint or indefinite discretization \eqref{solve-SAFEM} to obtain the finite element solution $\tu_0 := \hu_0$. 
	\item On the mesh $\mcT_k, \; k \geq 1$, solve the following SPD discretization: Find $\tu_k \in \mbV_k$ such that 
	\begin{equation}\label{solve-ATGFEM}
		a(\tu_k,v_k) = \langle f,v_k \rangle - N(\tu_{k-1},v_k), \quad \forall \; v_k \in \mbV_k, 
	\end{equation}
where the bilinear form $a(\cdot,\cdot)$ is defined in \eqref{norm-energy} and the second term in the left-hand side above is called the lower-order term, which is defined by 
\begin{equation}\label{bilinear-form-N}
	N(v,w) := (\vbeta \cdot \nabla v,w) + (\gamma v,w), \quad \forall \; v,w \in H_0^1(\Omega). 
\end{equation}
\end{enumerate}
\begin{remark}\label{rem-small-scale}
Note that the initial mesh $\mcT_0$ in the adaptive mesh family $\{\mcT_k\}_{k \geq 0}$ is the coarsest, hence the nonselfadjoint or indefinite discretization \eqref{solve-SAFEM} on $\mcT_0$ is a relatively small-scale problem in overall adaptivity. 
\end{remark}

However, we find that the ATGFEM behaves not well in some numerical examples; see Section \ref{sect-test} in details. Moreover, the convergence result of the ATGFEM is not rigorous since the analysis implicitly requires the higher-order $L^2$-norm error term which is only empirical but not theoretically justified.

In order to improve the ATGFEM, the proposed CATGFEM incorporates an additional correction step before solving the SPD problem \eqref{solve-ATGFEM}, i.e., first solve a small-scale discrete residual problem on the initial mesh $\mcT_0$ (see Remark \ref{rem-small-scale}), and then use the solution of the residual problem to correct the discrete solution on the previous mesh. The corresponding procedure is described as follows: 
\begin{enumerate}
\itemsep = 0pt \parskip = 0pt
\item On the initial mesh $\mcT_0$, solve the nonselfadjoint or indefinite discretization \eqref{solve-SAFEM} to obtain the finite element solution $u_0 = \hu_0$. 
\item On the mesh $\mcT_k\;(k \geqslant 1)$, perform the following two subprocedures: 
\begin{enumerate}[(1)]
	\itemsep = 0pt \parskip = 0pt
	\item Solve a nonselfadjoint or indefinite residual equation on the initial mesh $\mcT_0$, i.e., find $e_{0,k-1} \in \mbV_0$ such that 
	\begin{equation}\label{mysolver-k-1}
		\ha(e_{0,k-1},v_0) = \langle f,v_0 \rangle - \ha(u_{k-1},v_0), \quad \forall \; v_0 \in \mbV_0.
	\end{equation}
	\item Solve an SPD discretization on $\mcT_k$, i.e., find $u_k \in \mbV_k$ such that
	\begin{equation}\label{mysolver-k-2}
		a(u_k,v_k) = (f,v_k) - N(u_{k-1}+e_{0,k-1},v_k), \quad \forall\; v_k \in \mbV_k. 
	\end{equation}
\end{enumerate}
\end{enumerate}
Note that \eqref{mysolver-k-1} is relatively small-scale; consequently, solving \eqref{mysolver-k-1}--\eqref{mysolver-k-2} makes the treatment of nonselfadjoint or indefinite problems almost as easy as solving SPD problems. 

\begin{remark}\label{rem-CATG-1}For the case of $k = 1$, the right-hand side of \eqref{mysolver-k-1} is zero due to $u_0 = \hu_0$ being the solution of \eqref{solve-SAFEM}. This implies that $e_{0,0} = 0$, which means that \eqref{mysolver-k-1} does not work, and \eqref{mysolver-k-2} will be transformed into 
\begin{equation}\label{solve-CATGFEM-1}
	a(u_1,v_1) = \langle f, v_1 \rangle - N(u_0,v_1), \quad \forall \; v_1 \in \mbV_1, 
\end{equation}
which is indeed \eqref{solve-ATGFEM} for $k=1$ of the ATGFEM due to $u_0 = \tu_0$.  
\end{remark}

\begin{remark}The discretization \eqref{mysolver-k-1} is equivalent to the following form
\begin{equation}\label{mysolver-k-1-equivalence}
	\ha(u_{k-1}^*,v_0) = \langle f, v_0 \rangle, \quad \forall \; v_0 \in \mbV_0, 
\end{equation}
with $u_{k-1}^* := u_{k-1} + e_{0,k-1}$. We note that $u_{k-1}^* \in \mbV_{k-1}$ due to $e_{0,k-1} \in \mbV_0 \subset \mbV_{k-1}$. 
\end{remark}

\begin{remark}\label{rem-solve-difference}
We summarize the similarities and differences of Procedure \textbf{SOLVE} of ATGFEM and our CATGFEM: 
\begin{itemize}
	\itemsep = 0pt \parskip = 0pt
	\item On the initial mesh $\mcT_0$, ATGFEM and our CATGFEM both solve the nonselfadjoint or indefinite discretization \eqref{solve-SAFEM}. 
	\item On the mesh $\mcT_1$, our CATGFEM also behaves like ATGFEM (see Remark \ref{rem-CATG-1}). 
	\item The difference is that, starting from the mesh $\mcT_k,\; k \geq 2$, the SPD problem is constructed by the discrete solution on the previous mesh for the ATGFEM, while by a corrected one for our CATGFEM, where the corrected solution is updated via the solution of a small-scale residual problem \eqref{mysolver-k-1}. 
\end{itemize}
\end{remark}

\subsection{Procedure \textbf{ESTIMATE}}
For the ATGFEM in \cite{LiYkZhangY21:A908}, starting from the mesh $\mcT_k \; (k \geq 1)$, the discrete solutions $\tu_k$ and $\tu_{k-1}$ are used for the SPD operator $\mcL$ and the lower-order term operator $\mcN$, respectively, in the elementwise residual of the error estimator, where 
\begin{equation*}
	\mcL v := -\nabla \cdot (\valpha \nabla v), \quad \mcN v:= \vbeta \cdot \nabla v + \gamma v. 
\end{equation*}
We will adapt this idea for our CATGFEM setting. 

With the discrete solution $u_k$ and $u_{k-1}^*:= u_{k-1} + e_{0,k-1}$ for $k \geq 1$ at hand, define the residual
\begin{equation}\label{def-residual}
	R_T(u_k) = \begin{cases}
		f|_T - \hmcL u_0 |_T, & k = 0, \\
		f|_T - \mcL u_k|_T - \mcN u_{k-1}^* |_T, & k \geq 1, 
	\end{cases}
\end{equation}
for any element $T \in \mcT_k$, and define the jump 
\begin{equation}\label{jump}
	J_E(u_k) := [\valpha \nabla u_k \cdot \vect{n}]|_E := \valpha\nabla u_k|_{T_1} \cdot \vect{n}_1 + \valpha \nabla u_k |_{T_2} \cdot \vect{n}_2,
\end{equation}
for any side $E \in \mcE_k^I$, where $\mcE_k^I$ denotes the set of all interior sides being an edge ($d = 2$) or a face ($d = 3$) in $\mcT_k$, and  
the elements $T_1,T_2 \in \mcT_k$ share $E$ with the unit outward normal vectors $\vect{n}_1,\vect{n}_2$. 

We define the following local a posteriori error estimator: 
\begin{equation}\label{eta-T}
	\eta_k(u_k,T) := \Big(h_T^2 \|R_T(u_k)\|_{0,T}^2 + h_T 
	\sum_{E \subset \partial T \cap \Omega}\LtwE{J_E(u_k)}^2\Big)^\frac{1}{2}, 
\end{equation}	
 and for any element patch $S \subset \mcT_k$, define 
\begin{equation}\label{eta-D}
	\eta_k(u_k,S) := \Big(\sum_{T \in S} \eta_k^2 (u_k,T)\Big)^\frac{1}{2}.   
\end{equation}

\subsection{Procedure \textbf{MARK}}
This procedure aims to select which elements will be refined in the current mesh in term of the local contribution of the error estimator. We consider the D\"orfler marking strategy \cite{Dorfler96:1106}. Given a marking parameter $\theta \in (0,1)$, Procedure \textbf{MARK} outputs a set of marked elements $\mcM_k \subset \mcT_k$ with minimal cardinality such that 
\begin{equation}\label{mark}
	\eta_k^2(u_k,\mcM_k) \geq \theta \eta_k^2(u_k,\mcT_k).
\end{equation}

\subsection{Procedure \textbf{REFINE}}\label{part-refine}
Starting from a conforming and shape-regular initial mesh $\mcT_0$, a conforming mesh $\mcT_{k+1}$ is obtained from $\mcT_k$ by the local mesh refinement. To generate $\mcT_{k+1}$. we first bisect all marked elements, i.e., divide each element in $\mcM_k \subset \mcT_k$ into two equal volume parts, to get a new mesh $\mcT_{k+1}'$. In general, hanging nodes exist in $\mcT_{k+1}'$, so we have to refine some elements in $\mcT_{k+1}'$ to generate a conforming mesh $\mcT_{k+1}$. 

We assume that the mesh family $\{\mcT_k\}_{k \geq 0}$ generated by the above procedure is uniformly shape-regular. The possible choice of the local mesh refinement is the newest vertex bisection method \cite{Stevenson08:227}. Furthermore, note that meshes $\mcT_{k+1}$ and $\mcT_k$ is nested in the sense that the finite element spaces $\mbV_k \subset \mbV_{k+1}$. 

\subsection{Adaptive algorithm}

Based on the above descriptions of each procedure of adaptivity loop \eqref{loop}, we formulate our CATGFEM as follows: 
\begin{breakablealgorithm}
	\renewcommand{\algorithmicrequire}{\textbf{Input:}}
	\renewcommand{\algorithmicensure}{\textbf{Output:}}
	\caption{Correction adaptive two-grid finite element method (CATGFEM) }\label{alg-AFEMC}
	\begin{algorithmic}
		\REQUIRE The initial mesh $\mcT_0$, the source term $f$, the stopping parameter $tol > 0$, the marking parameter $\theta \in (0,1)$, and the adaptivity counter $k = -1$.
		\ENSURE The adaptive mesh $\mcT_K$ and the finite element solution $u_K$ as well as the corresponding error estimator $\eta_K(u_K,\mcT_K)$.
		\REPEAT
		\STATE $k = k + 1$. 
		\STATE \textbf{SOLVE} the finite element solution $u_k$ (note that $u_0 = \hu_0$) on the mesh $\mcT_k$ by the discretization \eqref{solve-SAFEM} if $k = 0$, or the discretization \eqref{solve-CATGFEM-1} if $k = 1$, otherwise, discretizations \eqref{mysolver-k-1}--\eqref{mysolver-k-2}.
		\STATE \textbf{ESTIMATE} the energy-norm error $\anorm{u-u_k}$ by the error estimator $\eta_k(u_k,\mcT_k)$ with the elementwise contribution $\eta_k(u_k,T)$ computed by the representations \eqref{eta-T}--\eqref{eta-D}. 
		\STATE\textbf{MARK} elements in the subset $\mcM_k \subset \mcT_k$ with minimal cardinality satisfying \eqref{mark}.
		\STATE\textbf{REFINE} all elements in the set $\mcM_k$ using the bisection method to produce the new mesh $\mcT_{k+1}$.
		\UNTIL{$\eta_k(u_k,\mcT_k) \leq tol$}
		\RETURN $\mcT_K = \mcT_k$, $u_K = u_k$, $\eta_K(u_K,\mcT_K) = \eta_k(u_k,\mcT_k)$. 
	\end{algorithmic}  
\end{breakablealgorithm}

\begin{remark}It is worth to note that the ATGFEM and our CATGFEM both adopt the D\"orfler marking strategy and the bisection method in Procedures \textbf{MARK} and \textbf{REFINE}, respectively. Their main difference is Procedure \textbf{SOLVE}; see Remark \ref{rem-solve-difference}. Note that, for Procedure \textbf{ESTIMATE}, the construction of the error estimator of the CATGFEM follows the idea of the ATGFEM. 

\end{remark}

\section{Convergence analysis}\label{sect3}

In this section, we first prove a key auxiliary result called the $L^2$-lifting property of the error of the corrected discrete solution, which will be frequently used in the analysis below. We are then devoted to verifying three components of contraction analysis: quasi-orthogonality, reliability and estimator reduction. Finally, we prove a weaker version of contraction of the quasi-error compared with the classical version, and derive the convergence of the proposed CATGFEM (Algorithm \ref{alg-AFEMC}). 

\subsection{$L^2$-lifting property} 
The following lemma indicates that, under the condition that the initial mesh size $h_0$ is sufficiently small, the $L^2$-norm error of the corrected discrete solution $u_{k-1}^* = u_{k-1} + e_{0,k-1}$ is a higher-order quantity of the energy-norm error of the discrete solution $u_{k-1}$. It is precisely this lemma that enables us to establish a series of stronger theoretical results, in particular more rigorous contraction and convergence properties for the CATGFEM, in comparison with the ATGFEM in \cite{LiYkZhangY21:A908}.

\begin{lemma}[$L^2$-lifting property]\label{lem-L2-lifting}
	Let $u$, $e_{0,k-1}$ and $u_k$ be the solutions of \eqref{model-VP}, \eqref{mysolver-k-1} and \eqref{mysolver-k-2}, and keep $u_{k-1}^*=e_{0,k-1} + u_{k-1}$ in mind. Then, there holds 
		\begin{equation}\label{result-L2-lifting}
			\Ltw{u-u_{k-1}^*} \leq \kappa(h_0) \anorm{u-u_{k-1}}
		\end{equation}
with a positive constant $\kappa(h_0) = \kappa(h_0,\Omega,\valpha,\vbeta,\gamma)$ satisfying
		\begin{equation}\label{property-kappaH}
			\lim_{h_0 \rightarrow 0} \kappa(h_0) = 0.  
		\end{equation}
\end{lemma}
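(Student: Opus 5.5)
The plan is an Aubin--Nitsche duality argument on the coarse space $\mbV_0$, exploiting the Galerkin orthogonality that the correction step \eqref{mysolver-k-1} provides. By \eqref{model-VP} and the equivalent form \eqref{mysolver-k-1-equivalence}, we have
\begin{equation*}
\ha(u-u_{k-1}^*,v_0)=0, \quad \forall \; v_0 \in \mbV_0 .
\end{equation*}
So $u_{k-1}^*$ behaves like a coarse-grid Galerkin projection of $u$, except that it is not one exactly, since $u_{k-1}^*\notin\mbV_0$ in general.

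The first step is to introduce the adjoint problem. Find $z\in H_0^1(\Omega)$ with $\ha(w,z)=(u-u_{k-1}^*,w)$ for all $w\in H_0^1(\Omega)$. This is well posed by the isomorphism of $\hmcL^*$ and \eqref{est-regularity}. Testing with $w=u-u_{k-1}^*$ and subtracting an arbitrary $z_0\in\mbV_0$ via the orthogonality gives
\begin{equation*}
\Ltw{u-u_{k-1}^*}^2=\ha(u-u_{k-1}^*,z-z_0)\lesssim \anorm{u-u_{k-1}^*}\,\inf_{z_0\in\mbV_0}\anorm{z-z_0}.
\end{equation*}
We then use the standard quantity
\begin{equation*}
\rho(h_0):=\sup_{g\in L^2,\,g\neq0}\inf_{z_0\in\mbV_0}\frac{\anorm{\hmcL^{*-1}g-z_0}}{\Ltw{g}}.
\end{equation*}
We have $\rho(h_0)\to0$ as $h_0\to0$ by a compactness argument: $\hmcL^{*-1}:L^2\to H_0^1$ is compact, since it factors through the compact embedding $L^2\hookrightarrow H^{-1}$. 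Together with density of the finite element spaces, this gives uniform convergence on the unit ball, with no regularity assumption needed (as in Schatz, or Xu \cite{XuJC96:1759}). This step yields $\Ltw{u-u_{k-1}^*}\lesssim\rho(h_0)\anorm{u-u_{k-1}^*}$.

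The second step, which I expect to be the main obstacle, is to replace $\anorm{u-u_{k-1}^*}$ by $\anorm{u-u_{k-1}}$. We write $u-u_{k-1}^*=(u-u_{k-1})-e_{0,k-1}$, so it suffices to show $\anorm{e_{0,k-1}}\lesssim\anorm{u-u_{k-1}}$. By the discrete inf-sup \eqref{infsup-FEM} on $\mbV_0$ (valid for $h_0$ small), together with \eqref{mysolver-k-1}, \eqref{model-VP} and \eqref{ha-boundedness},
\begin{equation*}
\anorm{e_{0,k-1}}\lesssim\sup_{v_0\in\mbV_0}\frac{\ha(e_{0,k-1},v_0)}{\anorm{v_0}}=\sup_{v_0\in\mbV_0}\frac{\ha(u-u_{k-1},v_0)}{\anorm{v_0}}\lesssim\anorm{u-u_{k-1}}.
\end{equation*}
The care needed here is to make sure the inf-sup constant in \eqref{infsup-FEM} is uniform once $h_0$ is below a threshold. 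Since only the fixed mesh $\mcT_0$ is involved, this is exactly the cited result.

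Combining the two steps gives \eqref{result-L2-lifting} with $\kappa(h_0)=C\rho(h_0)$, where $C$ depends only on $\Omega,\valpha,\vbeta,\gamma$. Then \eqref{property-kappaH} follows from $\rho(h_0)\to0$. Note that the hypothesis involving $u_k$ and \eqref{mysolver-k-2} plays no role; only \eqref{mysolver-k-1} is used.
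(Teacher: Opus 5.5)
Your proposal is correct and follows essentially the same route as the paper. Both use Galerkin orthogonality on $\mbV_0$ from \eqref{mysolver-k-1-equivalence}, an Aubin--Nitsche duality argument with $(\hmcL^*)^{-1}g$ whose best approximation in $\mbV_0$ tends to zero, and the discrete inf-sup on $\mbV_0$ to get $\anorm{u-u_{k-1}^*}\lesssim\anorm{u-u_{k-1}}$. The differences are cosmetic: the paper uses the identity $e_{0,k-1}=\hmcP_0(u-u_{k-1})$ and the continuous inf-sup where you use a direct triangle inequality, and your compactness argument justifies $\rho(h_0)\to 0$ where the paper only cites it.
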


\begin{remark}Note that Lemma \ref{lem-L2-lifting} mathematically gives the fact that the $L^2$-norm error $\|u-u_{k}^*\|_0$ is a higher-order quantity of the energy-norm error $\anorm{u-u_{k}}$ provided that the initial mesh size $h_0$ is sufficiently small, compared with \cite{LiYkZhangY21:A908}, where the $L^2$-norm error $\|u-\tu_k\|_0$ may be only empirically seen as a higher-order quantity of the energy-norm errror $\anorm{u-\tu_k}$ when $h_0$ is sufficiently small, here $\tu_k$ is the solution of \eqref{solve-ATGFEM}. 
\end{remark}

For the purpose of the proof of Lemma \ref{lem-L2-lifting}, we need the following projection operator $\hmcP_0: \; H_0^1(\Omega) \rightarrow \mbV_0$ defined by 
\begin{equation}\label{projection-H}
	\ha(v,w_0) = \ha(\hmcP_0 v,w_0), \quad \forall \; v \in H_0^1(\Omega),\; w_0 \in \mbV_0. 
\end{equation} 
The projection operator $\hmcP_0$ is well-defined if the initial mesh size $h_0$ is sufficiently small according to \eqref{est-regularity} and \eqref{infsup-FEM}; see \cite[Lemma 2.3]{XuJC96:1759}. Moreover, $\hmcP_0$ is bounded with respect to the energy-norm $\anorm{\cdot}$: 
	\begin{equation}\label{projection-H-bound}
		\anorm{\hmcP_0 v} \lesssim \anorm{v}.
	\end{equation}
In fact, by \eqref{infsup-FEM}, \eqref{projection-H} and \eqref{ha-boundedness}, we have 
	\begin{align}
		\anorm{\hmcP_0 v} \lesssim~& \sup_{w_0 \in \mbV_0} \dfrac{\ha(\hmcP_0 v,w_0)}{\anorm{w_0}} = \sup_{w_0 \in \mbV_0} \dfrac{\ha(v,w_0)}{\anorm{w_0}} \lesssim \anorm{v}. 
	\end{align}

\begin{proof}[Proof of Lemma \ref{lem-L2-lifting}]
	Substracting \eqref{mysolver-k-1-equivalence} from \eqref{model-VP} yields 
	\begin{equation}\label{result-orth}
		\ha(u-u_{k-1}^*,v_0) = \langle f, v_0 \rangle- \ha(e_{0,k-1}+u_{k-1},v_0) = 0, \quad \forall \; v_0 \in \mbV_0. 
	\end{equation}
By \eqref{result-orth}, \eqref{ha-boundedness} and the norm equivalence \eqref{equivalence-norm}, as the proof of the Aubin-Nitsche lemma \cite[Theorem 3.2.4]{Ciarlet02Book}, we have 
	\begin{align}
		\|u-u_{k-1}^*\|_0 &\lesssim  \|u-u_{k-1}^*\|_a \sup_{g \in L^2(\Omega)} \frac{1}{\|g\|_0}\inf_{\phi_0 \in \mbV_0}\|\phi_g - \phi_0\|_a \label{aubin-nitsche-1}\\
		&= \anorm{u-u_{k-1}^*}\sup_{g \in L^2(\Omega),\|g\|_0 = 1}\inf_{\phi_0 \in \mbV_0}\|\phi_g - \phi_0\|_a, \label{aubin-nitsche-2}
	\end{align}
where for given $g \in L^2(\Omega)$, $\phi_g$ is the unique solution of the following problem: Find $\phi_g \in H_0^1(\Omega)$ such that 
	\begin{equation}\label{aux-VP}
		(\hmcL^* \phi_g,v) = (\hmcL v,\phi_g) = \ha(v,\phi_g) = (g,v), \quad \forall \; v \in H_0^1(\Omega), 
	\end{equation}
and hence $\phi_g = (\hmcL^*)^{-1}g$. 
Moreover, from  \cite[Eqn. (2.4.5)]{Ciarlet02Book}, as the initial mesh size $h_0$ tends to zero, there exists a family of subspaces of $H_0^1(\Omega)$ such that  
	\begin{equation*}
		\lim_{h_0 \rightarrow 0}\sup_{g \in L^2(\Omega),\|g\|_0 = 1}\inf_{\phi_0 \in \mbV_0}\anorm{(\hmcL^*)^{-1}g-\phi_0} = 0. 
	\end{equation*}

For the result \eqref{result-L2-lifting}, it remains to prove that 
\begin{equation*}
	\|u-u_{k-1}^*\|_a \lesssim \anorm{u-u_{k-1}}. 
\end{equation*}
Collecting \eqref{projection-H}, \eqref{model-VP} and \eqref{mysolver-k-1-equivalence} yields 
	\begin{equation*}
		\ha(\hmcP_0 u,v_0) = \ha(u_{k-1}^*,v_0) = \ha(\hmcP_0(u_{k-1}+e_{0,k-1}),v_0), \quad \forall \; v_0 \in \mbV_0, 
	\end{equation*}
which implies that  
	\begin{equation*}
		\hmcP_0 u = \hmcP_0 (u_{k-1} + e_{0,k-1}).
	\end{equation*}
Noting thtat $e_{0,k-1} \in \mbV_0$, we have $\hmcP_0 u = \hmcP_0 u_{k-1} + e_{0,k-1}$, which implies
\begin{equation}\label{result-identity-eH}
	 e_{0,k-1} = \hmcP_0 (u-u_{k-1}).
	\end{equation} 
Combining this identity \eqref{result-identity-eH} with \eqref{infsup-VP}, \eqref{ha-boundedness} and \eqref{projection-H-bound} and keeping $u_{k-1}^* = u_{k-1}+e_{0,k-1}$ in mind lead to that 
	\begin{align}
		\anorm{u-u_{k-1}^*} &\lesssim \sup_{v \in H_0^1(\Omega)}\frac{\ha(u-u_{k-1}^*,v)}{\anorm{v}} \nonumber\\
							&= \sup_{v \in H_0^1(\Omega)}\frac{\ha(u-u_{k-1}-\hmcP_0 (u-u_{k-1}),v)}{\anorm{v}}\nonumber\\
						   &\lesssim \anorm{u-u_{k-1}}, \nonumber
    \end{align}
\end{proof}

\begin{remark}If the solution $\phi_g$ of \eqref{aux-VP} satisfies the extra regularity property $\phi_g \in H^{1+s}(\Omega) \; (0 < s \leq 1)$, then, applying the approximation property of $\hmcP_0$ \cite[Lemma 2.4]{XuJC96:1759} to \eqref{aubin-nitsche-1} and using the regularity estimate yield that 
	\begin{equation*}
		\|u-u_{k-1}^*\|_0 \lesssim h_0^s \anorm{u-u_{k-1}}. 
	\end{equation*} 
However, the extra regularity property is generally not satisfied in the presence of singularities, so we consider the lowest regularity property $\phi_g \in H_0^1(\Omega)$ throughout this paper. 
\end{remark}

\subsection{Quasi-orthogonality}
The following lemma establishes quasi-orthogonality for the energy-norm error without any remainder term, owing to Lemma~\ref{lem-L2-lifting}.
\begin{lemma}[Quasi-orthogonality]\label{lem-quasi-orth}
	Let $u$, $e_{0,k-1}$ and $u_k$ be the solutions of \eqref{model-VP}, \eqref{mysolver-k-1} and \eqref{mysolver-k-2}, and keep $u_{k-1}^*=e_{0,k-1} + u_{k-1}$ in mind. 
	Then, there holds
	\begin{equation}\label{result-quasi-orth}
		\anorm{u-u_k}^2 \leq \dfrac{1}{1-c_\qoo \kappa(h_0)} \anorm{u-u_{k-1}}^2 - \anorm{u_{k-1}-u_k}^2.  
	\end{equation}
with a constant $c_\qoo = c_\qoo(\Omega,\valpha,\vbeta,\gamma)> 0$ and the initial mesh size $h_0$ is sufficiently small such that $c_\qoo \kappa(h_0) < 1$, 
\end{lemma}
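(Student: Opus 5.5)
The plan is to expand the energy norm of $u-u_{k-1} = (u-u_k) + (u_k-u_{k-1})$ and control the cross term through the Galerkin-type relation satisfied by $u_k$. Subtracting \eqref{mysolver-k-2} from \eqref{model-VP} tested with $v_k\in\mbV_k$, and writing $\ha = a + N$, gives
\begin{equation*}
a(u-u_k,v_k) = -N(u-u_{k-1}^*,v_k), \quad \forall\; v_k \in \mbV_k .
\end{equation*}
Since $\mbV_{k-1}\subset\mbV_k$, we may take $v_k = u_k-u_{k-1}$. The identity
\begin{equation*}
\anorm{u-u_{k-1}}^2 = \anorm{u-u_k}^2 + \anorm{u_k-u_{k-1}}^2 + 2a(u-u_k,u_k-u_{k-1})
\end{equation*}
then becomes
\begin{equation*}
\anorm{u-u_k}^2 = \anorm{u-u_{k-1}}^2 - \anorm{u_k-u_{k-1}}^2 + 2N(u-u_{k-1}^*,u_k-u_{k-1}).
\end{equation*}

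The key step is to bound the remainder $N(u-u_{k-1}^*,w)$ by an $L^2$ norm of $u-u_{k-1}^*$, so that Lemma \ref{lem-L2-lifting} applies. The reaction part is immediate: $(\gamma(u-u_{k-1}^*),w)\le \|\gamma\|_{0,\infty}\Ltw{u-u_{k-1}^*}\Ltw{w}$. The convection part contains a gradient of the error, so we integrate by parts. Since both functions lie in $H_0^1(\Omega)$,
\begin{equation*}
(\vbeta\cdot\nabla e,w) = -(e,\nabla\cdot(\vbeta w)) = -(e,(\nabla\cdot\vbeta)w) - (e,\vbeta\cdot\nabla w).
\end{equation*}
This uses $\vbeta\in W^{1,\infty}(\Omega)^d$. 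Together with Poincar\'e and \eqref{equivalence-norm}, it yields
\begin{equation*}
|N(u-u_{k-1}^*,w)| \lesssim \Ltw{u-u_{k-1}^*}\anorm{w}.
\end{equation*}
Lemma \ref{lem-L2-lifting} then bounds the remainder by $C\kappa(h_0)\anorm{u-u_{k-1}}\anorm{u_k-u_{k-1}}$. I expect this step to be the main obstacle, since it is the only place where the structure of $N$ and the regularity of $\vbeta$ enter.

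The remaining step is absorption. By the triangle inequality, $\anorm{u_k-u_{k-1}}\le\anorm{u-u_k}+\anorm{u-u_{k-1}}$. Applying Young's inequality to the two resulting products, with $c_\qoo$ collecting the constants, gives
\begin{equation*}
2C\kappa(h_0)\anorm{u-u_{k-1}}\anorm{u_k-u_{k-1}} \le c_\qoo\kappa(h_0)\bigl(\anorm{u-u_k}^2 + \anorm{u-u_{k-1}}^2\bigr).
\end{equation*}
Moving the $\anorm{u-u_k}^2$ term to the left-hand side leaves a factor $(1-c_\qoo\kappa(h_0))$ there.

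A cleaner way to organise this is to note that $\anorm{u-u_{k-1}}^2 + c_\qoo\kappa(h_0)\anorm{u-u_{k-1}}^2 \le \anorm{u-u_{k-1}}^2/(1-c_\qoo\kappa(h_0))$, which holds because $(1+x)(1-x)\le1$. The subtracted term $\anorm{u_k-u_{k-1}}^2$ also picks up a factor when we divide by $1-c_\qoo\kappa(h_0)$. Since this factor is at least one, it is harmless: the subtracted term only becomes larger, so we may drop to coefficient one.

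This bookkeeping needs some care to produce exactly the stated form \eqref{result-quasi-orth}. Alternatively, one can absorb only into the $\anorm{u-u_{k-1}}^2$ term by using $\anorm{u-u_k}\le\anorm{u-u_{k-1}}+\anorm{u_k-u_{k-1}}$ and adjusting the Young weights. In either case the argument requires $h_0$ small enough that $c_\qoo\kappa(h_0)<1$.
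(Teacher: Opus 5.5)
Your proof is correct and follows essentially the paper's route: the same expansion of $\anorm{u-u_k}^2$, the identity $a(u-u_k,v_k)=-N(u-u_{k-1}^*,v_k)$ tested with $v_k=u_k-u_{k-1}$, the bound $N(v,w)\lesssim\Ltw{v}\Hon{w}$ (which the paper merely asserts and you correctly justify by integrating by parts using $\vbeta\in W^{1,\infty}(\Omega)^d$), Lemma~\ref{lem-L2-lifting}, and a triangle-inequality/Young absorption---the only difference being that you apply the triangle inequality to $\anorm{u_k-u_{k-1}}$, whereas the paper applies it to $\anorm{u-u_{k-1}}$ and then balances the Young weight ($\delta=\sqrt{2}+1$) so that the subtracted term keeps coefficient one after division. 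To close your bookkeeping: with $x=c\kappa(h_0)$ your absorption yields the coefficient $\frac{1+x}{1-x}$ (or $\frac{1}{(1-x)^2}$ in your ``cleaner'' variant) on $\anorm{u-u_{k-1}}^2$, not literally $\frac{1}{1-x}$, but both are bounded by $\frac{1}{1-2x}$ for $0\le x<\frac12$, so taking $c_{\qoo}=2c$ gives exactly \eqref{result-quasi-orth}.
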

\begin{proof}
	We show that  
	\begin{align}
		\anorm{u-u_k}^2 &= a(u-u_{k-1}+u_{k-1}-u_k,u-u_{k-1}+u_{k-1}-u_k)
			   \nonumber\\
			&= \anorm{u-u_{k-1}}^2 + \anorm{u_k-u_{k-1}}^2 + 2a(u-u_{k-1},u_{k-1}-u_k)
               \nonumber\\
            &= \anorm{u-u_{k-1}}^2 - \anorm{u_k-u_{k-1}}^2 + 2a(u-u_k,u_{k-1}-u_k). 
	           \label{step1-quasi-orth}
	\end{align}
For the third term in the right-hand side of \eqref{step1-quasi-orth}, using  \eqref{mysolver-k-1-equivalence}, the following estimate of the lower-order term $\mcN$ is defined in \eqref{bilinear-form-N}
\begin{equation}\label{bilinear-N-B0}
	N(v,w) \lesssim \Ltw{v}\Hon{w}, \quad \forall \; v,w \in H_0^1(\Omega).
\end{equation}
Then the result \eqref{result-L2-lifting} from Lemma \ref{lem-L2-lifting} and the norm equivalence \eqref{equivalence-norm} leads to 
	\begin{align}
		2a(u-u_k,u_{k-1}-u_k) &= -2N(u-u_{k-1}^*,u_{k-1}-u_k)
				\nonumber\\
		  &\lesssim \Ltw{u-u_{k-1}^*}\Hon{u_{k-1}-u_k}
		  	    \nonumber\\
	      &\lesssim \kappa(h_0) \anorm{u-u_{k-1}}\anorm{u_{k-1}-u_k}
	      		\nonumber\\
	      &\leq 2\wtC_\qoo\kappa(h_0) \Big(\anorm{u-u_k}\anorm{u_{k-1}-u_k} 
	      						  + \anorm{u_{k-1}-u_k}^2\Big)
	            \label{step2-quasi-orth}
	\end{align}
with a positive constant $\wtC_\qoo = \wtC_\qoo(\Omega,\valpha,\vbeta,\gamma)$. 
Applying the Young inequality with any $\delta > 0$ to the first term in the right-hand side of \eqref{step2-quasi-orth} and substituting the arising result into \eqref{step1-quasi-orth} yield that 
	\begin{equation*}
		\Big(1-\delta \wtC_\qoo \kappa(h_0)\Big)\anorm{u-u_k}^2 \leq \anorm{u-u_{k-1}}^2  - \Big(1-\delta^{-1}\wtC_\qoo \kappa(h_0) - 2\wtC_\qoo\kappa(h_0)\Big)\anorm{u_{k-1}-u_k}^2. 
	\end{equation*}
Setting
	\begin{equation*}
		1 - \delta \wtC_\qoo \kappa(h_0) = 1-\delta^{-1}\wtC_\qoo \kappa(h_0) - 2\wtC_\qoo\kappa(h_0) \quad \text{implies} \quad \delta = \sqrt{2} + 1. 
	\end{equation*}
Let the initial mesh size $h_0$ be sufficiently small such that $(\sqrt{2}+1)\wtC_\qoo \kappa(h_0) < 1$. We conclude the proof. 
\end{proof}

\subsection{Reliability}
 We first give the following approximation properties of the quasi-interpolation operator; see, e.g.,  \cite[Lemma 1.3]{verfurth13Book}), which is a necessary tool to prove the reliability of the error estimator. A typical example of the quasi-interpolation operator is the Scott-Zhang interpolant \cite{ScottZhang90:483}. 
\begin{lemma}\label{Lem:4.1} 
	There exists a quasi-interpolation operator $\Pi_{k}:\,H_0^1(\Omega )\rightarrow \mbV_k$ such that for any $v \in H_0^1(\Omega)$,  any $T\in \mcT_k$, and any $ E \in \mathcal{E}_{k}^I$, there hold 
	\begin{eqnarray}
		\left\|v-\Pi_{k}v\right\|_{0, T} \lesssim h_{T}\left\|\nabla v\right\|_{0, \hat{\omega}_{T}}, \label{operator-approximate-T}\\ 
		\left\|v-\Pi_{k}v\right\|_{0, E} \lesssim h_{T}^{\frac{1}{2}}\left\|\nabla v\right\|_{0, \hat{\omega}_{E}},\label{operator-approximate-E}
	\end{eqnarray} 
where the hidden constants only depend on the uniform shape regularity of the mesh family $\{\mcT_k\}_{k \geq 0}$, and the sets $\hat{\omega}_{T}$ and $\hat{\omega}_{E}$ denote the unions of elements sharing at least a vertex with $T$ and $E$, respectively. 
\end{lemma}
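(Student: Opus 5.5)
The plan is to build $\Pi_k$ as the Scott--Zhang interpolant \cite{ScottZhang90:483} onto $\mbV_k$, adapted to homogeneous boundary data. Its local $H^1$-stability and polynomial invariance on element patches then give the estimates through a Bramble--Hilbert (Poincar\'e-type) argument on each patch.

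\textbf{Construction.} For each Lagrange node $z$ of $\mbV_k$, I would choose a side $E_z$ (an edge if $d=2$, a face if $d=3$) containing $z$. If $z \in \partial\Omega$, I take $E_z \subset \partial\Omega$, so that $\Pi_k v$ inherits the zero trace of $v \in H_0^1(\Omega)$.

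Let $\{\psi_j\}$ be the $L^2(E_z)$-dual basis to the nodal basis functions restricted to $E_z$. I then set
\[
\Pi_k v := \sum_z \Big(\int_{E_z}\psi_z v \,\mrd s\Big)\,\varphi_z .
\]
This map is well defined on $H^1$ by the trace theorem. It is a projection onto $\mbV_k$, and in particular it reproduces polynomials of degree $\ell$ on each patch $\homega_T$.

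\textbf{Local stability.} Scaling to the reference element and using a scaled trace inequality, I would show
\[
\|\Pi_k v\|_{0,T} + h_T\|\nabla \Pi_k v\|_{0,T} \lesssim \|v\|_{0,\homega_T} + h_T\|\nabla v\|_{0,\homega_T}.
\]
Uniform shape regularity enters twice here. It bounds $\|\psi_z\|_{0,\infty}\lesssim h_T^{-(d-1)}$, and it bounds the number and shape of elements in $\homega_T$.

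\textbf{Element estimate \eqref{operator-approximate-T}.} For an interior patch, I subtract a constant $c$, using $\Pi_k c = c$:
\[
\|v-\Pi_k v\|_{0,T} \le \|v-c\|_{0,T} + \|\Pi_k(v-c)\|_{0,T}.
\]
Choosing $c$ as the patch mean, the Poincar\'e inequality on $\homega_T$ gives $\|v-c\|_{0,\homega_T}\lesssim h_T\|\nabla v\|_{0,\homega_T}$. The constant here is uniform because only finitely many reference patch configurations occur under uniform shape regularity.

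For a patch touching $\partial\Omega$, I would instead take $c=0$ and use a Friedrichs inequality. This is valid because $v$ vanishes on a boundary side of comparable size inside the patch.

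\textbf{Side estimate \eqref{operator-approximate-E}.} I would apply the scaled trace inequality
\[
\|w\|_{0,E}^2 \lesssim h_T^{-1}\|w\|_{0,T}^2 + h_T\|\nabla w\|_{0,T}^2
\]
with $w=v-\Pi_k v$ on an element $T \supset E$. The first term is controlled by \eqref{operator-approximate-T}. The second is controlled by the stability bound for $\|\nabla \Pi_k (v-c)\|_{0,T}$, again with the patch mean $c$. Together these give $h_T^{1/2}\|\nabla v\|_{0,\homega_E}$.

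\textbf{Main obstacle.} I expect the hardest point to be the uniformity of the Poincar\'e/Friedrichs constants over all patches generated by repeated bisection. I would handle it by the standard observation that newest vertex bisection \cite{Stevenson08:227} produces finitely many similarity classes of elements, so only finitely many patch shapes arise up to scaling.
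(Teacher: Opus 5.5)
The paper gives no proof of this lemma. It cites Verf\"urth's book and names the Scott--Zhang interpolant as the typical example, so your Scott--Zhang construction is the intended route, and your local stability bound is the standard one.

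\textbf{Main gap: the side estimate gives the wrong patch.} You apply the scaled trace inequality to $w=v-\Pi_k v$ on an element $T\supset E$. You then bound $\|w\|_{0,T}$ via \eqref{operator-approximate-T} and $\|\nabla\Pi_k(v-c)\|_{0,T}$ via stability, and both are controlled by $\|\nabla v\|_{0,\homega_T}$. Since every vertex of $E$ is a vertex of $T$, one has $\homega_E\subset\homega_T$, and the inclusion is strict in general: elements touching $T$ only at the vertex opposite to $E$ belong to $\homega_T\setminus\homega_E$. The underlying reason is that $w|_T$ contains the degree of freedom at that opposite vertex, which in general depends on $v$ outside $\homega_E$, whereas $w|_E$ does not. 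So you have proved $\|v-\Pi_k v\|_{0,E}\lesssim h_T^{1/2}\|\nabla v\|_{0,\homega_T}$, which is weaker than \eqref{operator-approximate-E}. It would still suffice for the reliability proof, by finite overlap, but not for the lemma as stated.

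To repair it, work on $E$ directly. Let $c$ be the mean of $v$ over $\homega_E$ and split $\|v-\Pi_k v\|_{0,E}\le\|v-c\|_{0,E}+\|\Pi_k(v-c)\|_{0,E}$. The second term involves only the functionals $\int_{E_z}\psi_z(\cdot)\,\mrd s$ for nodes $z\in E$, whose sides $E_z$ lie in elements of $\homega_E$. Hence it is $\lesssim h_T^{-1/2}\|v-c\|_{0,\homega_E}+h_T^{1/2}\|\nabla v\|_{0,\homega_E}$. The first term is handled by the scaled trace inequality on $T\subset\homega_E$. A Poincar\'e inequality on $\homega_E$ finishes the argument; $\homega_E$ is face-connected, being the union of the vertex stars of $E$, all of which contain $T$.

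\textbf{Smaller points.} Three further issues need fixing.

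First, your boundary case is unnecessary and, as justified, incorrect. $\homega_T$ may meet $\partial\Omega$ only in a single vertex of a neighbouring element, and vanishing at a point gives no Friedrichs inequality in $H^1$ for $d=2,3$. Your $\Pi_k$ is defined on all of $H^1(\Omega)$ with the boundary nodes included, so $\Pi_k c=c$ on every element. Subtracting the patch mean therefore works for every $T$, while $v\in H_0^1(\Omega)$ still forces the boundary degrees of freedom of $\Pi_k v$ to vanish. If you want $c=0$, restrict it to elements $T$ that have a node on $\partial\Omega$; then the boundary side $E_z$ does lie in $\homega_T$.

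Second, uniform shape regularity does not leave only finitely many patch configurations, because vertex positions vary continuously. Uniform Poincar\'e constants come instead from compactness of the normalised configurations. A more concrete route is a chain argument across shared faces: combine elementwise Poincar\'e with the scaled trace inequality, using that vertex stars in a Lipschitz domain are face-connected. Your fallback via the finitely many similarity classes of newest vertex bisection is valid, but it makes the constants depend on $\mcT_0$ and on the refinement rule. The lemma claims dependence on shape regularity only, and the paper lists bisection merely as one possible choice.

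Third, for $\ell\ge d+1$ there are element-interior nodes lying on no side; Scott--Zhang takes $E_z=T$ for these.
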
 

The following lemma indicates that the global error estimator is the a posteriori upper bound of the energy-norm error, up to some theoretical higher-order quantity. 
	To save notations, we write $\eta_k(u_k)$ instead of $\eta_k(u_k,\mcT_k)$ below. 

\begin{lemma}[Reliability]\label{lem-rel} 
	Let $u$ and $u_k$ be the solutions of \eqref{model-VP} and \eqref{mysolver-k-2}. Then, for $k \geq 1$, there holds 
	\begin{equation}\label{result-rel}
		\anorm{u-u_k} \leq C_\rel \eta_k(u_k) + \widetilde{C}_\rel\kappa(h_0) \anorm{u-u_{k-1}}
	\end{equation}
with positive constants $C_\rel = C_\rel(\Omega,\valpha)$ dependent of the uniform shape regularity of the mesh family $\{\mcT_k\}_{k \geq 0}$ and $\widetilde{C}_\rel = \wtC_\rel(\Omega,\valpha,\vbeta,\gamma)$. 

Specially for $k = 0$, the second term in the right-hand side of \eqref{result-rel} will vanish. 
\end{lemma}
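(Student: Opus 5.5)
The plan is the standard residual-based reliability argument. The novelty is that the residual of $u_k$ is measured with the SPD operator $\mcL$ applied to $u_k$ and the lower-order operator $\mcN$ applied to the corrected solution $u_{k-1}^*$. This mismatch produces a consistency term, which Lemma \ref{lem-L2-lifting} controls.

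First, for $k\geq 1$, I write the error equation in terms of the SPD form $a(\cdot,\cdot)$. For any $v\in H_0^1(\Omega)$, \eqref{model-VP} gives $a(u,v)=\langle f,v\rangle - N(u,v)$. Hence
\begin{equation*}
	a(u-u_k,v) = \langle f,v\rangle - a(u_k,v) - N(u_{k-1}^*,v) - N(u-u_{k-1}^*,v).
\end{equation*}
By \eqref{mysolver-k-2}, the first three terms on the right vanish for $v=v_k\in\mbV_k$; this is Galerkin orthogonality for the residual functional
\begin{equation*}
	\mathcal{R}(v):=\langle f,v\rangle - a(u_k,v)-N(u_{k-1}^*,v).
\end{equation*}
I take $v=u-u_k$, insert $v-\Pi_k v$ with the quasi-interpolant of Lemma \ref{Lem:4.1}, and integrate by parts elementwise. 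This gives
\begin{equation*}
	\mathcal{R}(v-\Pi_k v)=\sum_{T\in\mcT_k}\Big[(R_T(u_k),v-\Pi_kv)_T - \tfrac12\sum_{E\subset\partial T\cap\Omega}(J_E(u_k),v-\Pi_kv)_E\Big],
\end{equation*}
with $R_T$ exactly as in \eqref{def-residual} for $k\geq1$. I use the sign conventions of \eqref{jump} and assume enough regularity of $f$ so that the elementwise residual makes sense, as the estimator implicitly does. Cauchy--Schwarz, then \eqref{operator-approximate-T}--\eqref{operator-approximate-E}, then finite overlap of the patches $\homega_T,\homega_E$ (by uniform shape regularity), and finally \eqref{equivalence-norm} yield $\mathcal{R}(v-\Pi_kv)\lesssim \eta_k(u_k)\anorm{v}$. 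The constant here depends only on $\valpha$, $\Omega$ and the shape regularity, which gives $C_\rel$.

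The remaining consistency term is $-N(u-u_{k-1}^*,v)$. By \eqref{bilinear-N-B0}, Lemma \ref{lem-L2-lifting} and \eqref{equivalence-norm},
\begin{equation*}
	|N(u-u_{k-1}^*,v)|\lesssim \Ltw{u-u_{k-1}^*}\Hon{v}\lesssim \kappa(h_0)\anorm{u-u_{k-1}}\anorm{v},
\end{equation*}
with a constant depending on $\vbeta,\gamma,\Omega,\valpha$; this gives $\wtC_\rel$. Setting $v=u-u_k$ in $a(u-u_k,v)=\anorm{u-u_k}^2$ and dividing by $\anorm{u-u_k}$ yields \eqref{result-rel}.

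For $k=0$, $u_0=\hu_0$ solves \eqref{solve-SAFEM}, and $R_T$ involves the full operator $\hmcL$. Here I cannot use the SPD form directly. Instead I use the inf-sup condition \eqref{infsup-VP} for $\ha$:
\begin{equation*}
	\anorm{u-u_0}\lesssim \sup_{w}\frac{\ha(u-u_0,w)}{\anorm{w}}.
\end{equation*}
Galerkin orthogonality $\ha(u-u_0,w_0)=0$ for $w_0\in\mbV_0$ then lets me subtract $\Pi_0w$, and the same integration by parts gives the bound by $\eta_0(u_0)$ with no remainder term. The constant then also depends on the inf-sup constant, which I absorb into $C_\rel$.

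The main obstacle is identifying the correct splitting for $k\geq1$. The residual must be centered at $u_{k-1}^*$ rather than at $u_{k-1}$, so that the leftover term is an $L^2$ quantity of $u-u_{k-1}^*$; only then does Lemma \ref{lem-L2-lifting} apply instead of a mere energy bound. The integration-by-parts step itself is routine.
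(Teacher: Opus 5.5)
Your proposal is correct and follows the paper's proof essentially step for step for $k\geq 1$. It uses the same splitting of $a(u-u_k,v)$ into a residual functional that vanishes on $\mbV_k$ by \eqref{mysolver-k-2}, handled with the quasi-interpolant of Lemma \ref{Lem:4.1}, plus the consistency term $-N(u-u_{k-1}^*,v)$, controlled by \eqref{bilinear-N-B0} and Lemma \ref{lem-L2-lifting}. For $k=0$ the paper only cites the reliability estimate of Feischl, F\"uhrer and Praetorius, and your inf-sup plus Galerkin-orthogonality argument is the standard proof behind that result; its constant inherits the dependence of the inf-sup constant on $\vbeta,\gamma$, which the paper's stated dependence $C_\rel(\Omega,\valpha)$ also overlooks.
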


\begin{remark}In \cite[Theorem 3.1]{LiYkZhangY21:A908}, the reliability of the error estimator reads in our setting as follows: 
	\begin{equation}\label{result-rel-ATGFEM}
		\anorm{u-\tu_k} \lesssim \tilde{\eta}_k(\tu_k) + \|u-\tu_{k-1}\|_0, 
	\end{equation}
where $\tilde{\eta}_k(\tu_k)$ is defined in \cite[(3.9)]{LiYkZhangY21:A908}. Notice that the reference \cite{LiYkZhangY21:A908} thinks that \eqref{result-rel-ATGFEM} holds up to an empirical higher-order quantity $\|u-\tu_{k-1}\|_0$ compared with the corresponding energy-norm error $\anorm{u-\tu_{k-1}}$. Therefore, our reliablity \eqref{result-rel} given by Lemme \ref{lem-rel} is a better result than \eqref{result-rel-ATGFEM} due to the theoretical higher-order remainder term. 
	
\end{remark}

\begin{proof}Note that if $k = 0$, we readily obtain $\anorm{u-u_0} \leq C_\rel \eta_0(u_0)$ according to \cite[Eqn. (2.7)]{FeischlFuhrer14:601} and the norm equivalence \eqref{equivalence-norm}. It remains to prove \eqref{result-rel} for $k \geq 1$. 
	
	For any $v \in H_0^1(\Omega)$ and $v_k \in \mbV_k$, using \eqref{model-VP} and \eqref{mysolver-k-2}, we show that 
\begin{align}
	a(u-u_k,v) =~& a(u-u_k,v-v_k) + a(u-u_k,v_k) \nonumber\\ 
		=~& \langle f,v-v_k \rangle - N(u,v-v_k) - a(u_k,v-v_k) - N(u-u_{k-1}^*,v_k)\nonumber \\
		=~& \langle f,v-v_k \rangle - a(u_k,v-v_k) - N(u_{k-1}^*,v-v_k) - N(u-u_{k-1}^*,v).\label{step1-rel}
	\end{align}
Applying integration by parts, the H\"older inequality, the estimate \eqref{bilinear-N-B0}, and the result \eqref{result-L2-lifting} of Lemma \ref{lem-L2-lifting} to \eqref{step1-rel} yields  
\begin{align}
	&a(u-u_k,v) \nonumber\\
	=~& \sum_{T \in  \mcT_k}(f-\mcL u_k - \mcN u_{k-1}^*,v-v_k)_T 
					- \sum_{E \in \mcE_k^I} \int_E[\valpha\nabla u_k\cdot \vect{n}] (v-v_k)\mrd s \nonumber- N(u-u_{k-1}^*,v)\nonumber\\
		   \leq~&\sum_{T \in  \mathcal{T}_k}\|R_T(u_k)\|_{0,T}\|v-v_k\|_{0,T}
				+\sum_{E \in \mathcal{E}_k^I} \|J_E(u_k) \|_{0, E}\| v-v_k\|_{0, E}+C(\vbeta,\gamma)\kappa(h_0) \anorm{u-u_{k-1}}\|v\|_1. \label{step2-rel}
\end{align}
Taking $v_{k}=\Pi_{k}v$, where $\Pi_{k}$ is the quasi-interpolation operator in Lemma \ref{Lem:4.1}, and using the corresponding approximation properties \eqref{operator-approximate-T}--\eqref{operator-approximate-E}, the Cauchy-Schwarz inequality, the uniform shape regularity of the mesh family $\{\mcT_k\}_{k \geq 0}$ and the norm equivalence \eqref{equivalence-norm} lead to that 
\begin{align}
	a(u-u_k,v) \leq~& \sum_{T \in  \mcT_k}\|R_T(u_k)\|_{0,T}\|v-\Pi_k v\|_{0, T}
				      +\sum_{E \in \mcE_k^I} \|J_E(u_k)\|_{0,E}\|v-\Pi_k v\|_{0,E}\nonumber+C(\vbeta,\gamma)\kappa(h_0) \anorm{u-u_{k-1}}\|v\|_1\nonumber\\
		\lesssim~& \Big(\sum_{T \in  \mcT_k}h_T\|R_T(u_k)\|_{0,T}\|\nabla v\|_{0,\homega_T}
				 +\sum_{E \in \mcE_k^I} h_T^\frac12\|J_E(u_k)\|_{0,E}\|\nabla v\|_{0, \homega_E}\Big)\nonumber+\kappa(h_0)\anorm{u-u_{k-1}}\|v\|_1\nonumber\\
		\lesssim~&\Big(\sum_{T \in \mcT_k}h_T^2\|R_T(u_k)\|_{0,T}^2 
				   + \sum_{E \in \mcE_k^I} h_T\|J_E(u_k)\|_{0,  E}^2\Big)^\frac{1}{2}\anorm{v}+\kappa(h_0)\anorm{u-u_{k-1}}\anorm{v},\label{step3-rel}
\end{align}
where the uniform shape regularity implies that 
	\begin{equation*}
		\Big(\sum_{T \in \mcT_k}\LtwD{\nabla v}{\hat{\omega}_T}^2
	+	\sum_{E \in \mcE_k^I}\LtwD{\nabla v}{\homega_E}^2\Big)^\frac{1}{2} \lesssim \Hon{v}. 
	\end{equation*}
Taking $v=u-u_{k}$ in \eqref{step3-rel} yields \eqref{result-rel}. 
\end{proof}

%

\subsection{Estimator reduction}
The following lemma indicates that the error estimator is contractive up to some perturbations. 
\begin{lemma}[Estimator reduction]\label{lem-est-reduce} 
	Let $u$ and $u_k$ be the solutions of \eqref{model-VP} and \eqref{mysolver-k-2}. Then, for $k \geq 2$, there holds
	\begin{align}
		\eta_k^2(u_k) \leqslant~& \rho_\est \eta_{k-1}^2(u_{k-1}) 
+ C_\est\anorm{u_k-u_{k-1}}^2+ C_\rem \kappa(h_0)^2\left(\anorm{u-u_{k-1}}^2 + \anorm{u-u_{k-2}}^2\right)
\label{result-est-reduce}
	\end{align}
with $\rho_\est = \rho_\est(d,\theta) \in (0,1)$, and the positive constants $C_\est = C_\est(d,\Omega,\valpha)$ and $C_\rem =  C_\rem(\vbeta,\gamma)$ both also dependent of the uniform shape regularity of the mesh family $\{\mcT_k\}_{k \geq 0}$. 

Particularly for $k = 1$, the term $C_\rem \kappa(h_0)^2(\anorm{u-u_{k-1}}^2 + \anorm{u-u_{k-2}}^2)$ of \eqref{result-est-reduce} will vanish. 
\end{lemma}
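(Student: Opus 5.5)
The plan is the standard estimator-reduction argument of Cascón–Kreuzer–Nochetto–Siebert and Feischl et al., adapted to the two-argument residual \eqref{def-residual}. The idea is to compare $\eta_k(u_k)$ with the same estimator evaluated at the previous solutions on the fine mesh. For $k\ge 2$, regard the estimator on $\mcT_k$ as a function of the pair $(w,z)$, with the elementwise residual $f-\mcL w-\mcN z$ and jump $J_E(w)$. Then $\eta_k(u_k)=\eta_k(u_k,u_{k-1}^*)$, and I introduce the intermediate quantity $\eta_k(u_{k-1},u_{k-2}^*)$ on $\mcT_k$.

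First I would prove a local Lipschitz (stability) bound. By the triangle inequality in $\ell^2$, the inverse estimates $h_T\|\mcL v\|_{0,T}\lesssim\|\nabla v\|_{0,T}$ and $h_T^{1/2}\|J_E(v)\|_{0,E}\lesssim\|\nabla v\|_{0,\omega_E}$ for $v\in\mbV_k$ (here $\valpha\in W^{1,\infty}$ is used), and $h_T\|\mcN v\|_{0,T}\lesssim h_T(\|v\|_{1,T})$, I get
\begin{equation*}
\eta_k(u_k,u_{k-1}^*)\le \eta_k(u_{k-1},u_{k-2}^*)+C\anorm{u_k-u_{k-1}}+C\,\Big(\sum_{T\in\mcT_k}h_T^2\|\mcN(u_{k-1}^*-u_{k-2}^*)\|_{0,T}^2\Big)^{1/2}.
\end{equation*}
Then, by Young's inequality with parameter $\delta$,
\begin{equation*}
\eta_k^2(u_k)\le(1+\delta)\eta_k^2(u_{k-1},u_{k-2}^*)+C(1+\delta^{-1})\big(\anorm{u_k-u_{k-1}}^2+\mathrm{(lower\ order)}^2\big).
\end{equation*}

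Second, I handle the reduction on refined elements. Since $u_{k-1}$ and $u_{k-2}^*$ are the exact data of $\eta_{k-1}(u_{k-1})$ on $\mcT_{k-1}$, and discrete functions of $\mbV_{k-1}$ have no jumps across new interior sides, bisection gives $h_{T'}\le 2^{-1/d}h_T$ for each child $T'$ of a marked $T\in\mcM_{k-1}$. Hence
\begin{equation*}
\eta_k^2(u_{k-1},u_{k-2}^*)\le\eta_{k-1}^2(u_{k-1})-(1-2^{-1/d})\eta_{k-1}^2(u_{k-1},\mcM_{k-1}),
\end{equation*}
and the D\"orfler property \eqref{mark} turns this into a bound by $(1-(1-2^{-1/d})\theta)\eta_{k-1}^2(u_{k-1})$. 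Choosing $\delta$ small gives $\rho_\est=(1+\delta)(1-(1-2^{-1/d})\theta)<1$.

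The main obstacle is the lower-order perturbation. Here I want an $L^2$ bound, not an $H^1$ bound, so that $\kappa(h_0)$ appears. For that I would avoid inverse estimates on $\mcN$ and estimate directly
\begin{equation*}
h_T\|\mcN v\|_{0,T}\le h_T\|\vbeta\|_\infty\|\nabla v\|_{0,T}+h_T\|\gamma\|_\infty\|v\|_{0,T}.
\end{equation*}
The gradient part is troublesome. I would instead use the inverse estimate $h_T\|\nabla v\|_{0,T}\lesssim\|v\|_{0,T}$, valid since $u_{k-1}^*-u_{k-2}^*\in\mbV_{k-1}\subset\mbV_k$. Both parts are then bounded by $\|u_{k-1}^*-u_{k-2}^*\|_0\le\|u-u_{k-1}^*\|_0+\|u-u_{k-2}^*\|_0$. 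Lemma~\ref{lem-L2-lifting} bounds these by $\kappa(h_0)(\anorm{u-u_{k-1}}+\anorm{u-u_{k-2}})$, and squaring gives the $C_\rem\kappa(h_0)^2$ term. This needs $\mcN$ evaluated on the difference of the corrected solutions, which is exactly how I split the residual in the Lipschitz step. Since the constant depends only on $\vbeta,\gamma$ and shape regularity, this matches the claimed dependence of $C_\rem$.

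For $k=1$, the residual on $\mcT_0$ is $f-\hmcL u_0=f-\mcL u_0-\mcN u_0$, and $u_0^*=u_0$ since $e_{0,0}=0$. So the comparison is with the same $z=u_0$ and the lower-order difference is zero, which removes the remainder term.
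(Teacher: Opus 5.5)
Your proposal is correct and follows essentially the same route as the paper: the paper's intermediate quantity $\eta_k(u_{k-1},T)$ is exactly your $\eta_k(u_{k-1},u_{k-2}^*)$ on $\mcT_k$. Like you, it uses inverse estimates (including $h_T\|\vbeta\cdot\nabla v\|_{0,T}\lesssim\|v\|_{0,T}$ for $v=u_{k-1}^*-u_{k-2}^*$), Young's inequality, Lemma~\ref{lem-L2-lifting} applied to both $u_{k-1}^*$ and $u_{k-2}^*$, and the D\"orfler-based reduction on marked elements. Your factor $1-(1-2^{-1/d})\theta$ is the correct one (the paper's $\rho_{\mathrm{red}}=1-2^{-1/d}\theta$ appears to be a slip), and, as in the paper, the bound $h_{T'}\le 2^{-1/d}h_T$ tacitly requires $h_T:=|T|^{1/d}$ (equivalent to $\mathrm{diam}(T)$ by shape regularity) rather than the diameter itself.
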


\begin{proof}For the elementwise error estimator \eqref{eta-T}, the following inequality 
	$$
	(b_1^{2}+c_1^{2})^{\frac{1}{2}}-(b_2^{2}+c_2^{2})^{\frac{1}{2}}\leqslant (b_1-b_2) + (c_1 - c_2), \quad \forall \; b_1,b_2,c_1,c_2 \geqslant 0, 
	$$
implies that 
	\begin{align}
	\eta_k(u_k,T) - \eta_k(u_{k-1},T)
		\leq~&h_T\LtwT{R_T(u_k)-R_T(u_{k-1})}+ \sum_{E \subset \partial T\cap\Omega}h_T^\frac{1}{2}\LtwE{J_E(u_k)-J_E(u_{k-1})}. \label{step1-stability}
	\end{align}
By the definition \eqref{def-residual} of $R_T(\cdot)$ and the inverse estimate, the first term in the right-hand side of \eqref{step1-stability} is bounded by 
	\begin{align}
	h_T \LtwT{R_T(u_k)-R_T(u_{k-1})}  \leq~&
		h_T\Big(\LtwT{(\nabla\cdot\valpha)\cdot\nabla(u_k-u_{k-1})}
		+\LtwT{\valpha:\nabla^2(u_k-u_{k-1})}
		\nonumber\\
		&+\LtwT{\vbeta\cdot\nabla(u_{k-1}^*-u_{k-2}^*)}
		 +\LtwT{\gamma(u_{k-1}^*-u_{k-2}^*)}\Big)
		\nonumber\\
		\lesssim~& \LtwT{\nabla(u_k-u_{k-1})} + \LtwT{u_{k-1}^* - u_{k-2}^*}
		\label{step2-stability}
	\end{align}
for $k \geq 2$, where $\nabla^2(\cdot)$ denotes the Hessian matrix. Note that, for $k = 1$, the last term in the right-hand side of \eqref{step2-stability} will vanish due to $u_0^* = u_0 + e_{0,0} = u_0$ from Remark \ref{rem-CATG-1}. 

For the estimation of the second term in the right-hand side of \eqref{step1-stability}, we note that for any side $E \in \partial T \cap \Omega$, there exists an element $T' \in \mcT_k$ such that $T' \cap T=E$. By the definition \eqref{jump} of the jump, the trace theorem and the inverse estimate, we show that 
\begin{align}
	h_T^{\frac{1}{2}}\LtwE{J_E(u_k)-J_E(u_{k-1})}\leq~&
		h_T^\frac12(\LtwE{\valpha\nabla(u_k-u_{k-1})|_{\partial T \cap E}\cdot\vn_T}+\LtwE{\valpha\nabla(u_k-u_{k-1})|_{\partial T' \cap E}\cdot\vn_{T'}})
			\nonumber\\
		\lesssim~& \LtwD{\nabla(u_k-u_{k-1})}{T \cup T'}. 
			\nonumber
\end{align}
Then, it holds that 
	\begin{align}
		\sum_{E \in \partial T \cap \Omega}h_T^\frac12\LtwE{J_E(u_k)-J_E(u_{k-1})}
		\lesssim~&(d+1)\LtwD{\nabla(u_k-u_{k-1})}{\omega_T}, 
		\label{step3-stability}
	\end{align}
where $\omega_T$ denotes the set of elements sharing the edge $E \in \partial T \cap \Omega$ with the element $T$.  
Substituting \eqref{step2-stability} and \eqref{step3-stability} into \eqref{step1-stability} yields that 
	\begin{align}
		\eta_k(u_k,T) \leq~&  \eta_k(u_{k-1},T) +  C_1\LtwD{\nabla (u_k-u_{k-1})}{\omega_T} + C_2\LtwT{u_{k-1}^*-u_{k-2}^*} \nonumber
	\end{align}
for $k \geq 2$ with positive constants $C_1= C_1(d,\Omega,\valpha)$ and $C_2 = C_2(\vbeta,\gamma)$ both also dependent of the uniform shape regularity of the mesh family $\{\mcT_k\}_{k \geq 0}$, where the last term in the above right-hand side will vanish for $k = 1$. 

Applying the Young inequality with $\veps > 0$ to the square of the above inequality, summing the arising result over all $T \in \mcT_k$ and using the norm equivalence \eqref{equivalence-norm} as well as \eqref{result-L2-lifting} of Lemma \ref{lem-L2-lifting} lead to that 
\begin{align}
	\eta_k^2( u_k) \leq~& (1+\veps)\eta_k^2(u_{k-1})
		+2(1+\veps^{-1})\Big((d+2)C_1^2\Ltw{\nabla(u_k-u_{k-1})}^2 
		+C_2^2\Ltw{u_{k-1}^*-u_{k-2}^*}^2\Big)\nonumber\\
		\leq~&(1+\veps)\eta_k^2(u_{k-1}) + 2(1+\veps^{-1})(d+2)C_1^2C(\Omega,\valpha)
		\anorm{u_k-u_{k-1}}^2 \nonumber\\
		&+4(1+\veps^{-1})C_2^2\kappa(h_0)^2(\anorm{u-u_{k-1}}^2 + \anorm{u-u_{k-2}}^2)
		\label{step4-stability} 
\end{align}
for $k \geq 2$, where we use the fact that the sum of norms on $\omega_T$ covers each element at least $d+2$ times. While the term $4(1+\veps^{-1})C_2^2\kappa(h_0)^2(\anorm{u-u_{k-1}}^2 + \anorm{u-u_{k-2}}^2)$ in \eqref{step4-stability} will vanish for $k=1$. 

 Furthermore, it is easy to obtain that 
 \begin{equation}\label{reduction}
 	\eta_k^2(u_{k-1}) \leq \rho_\reduce\eta_{k-1}^2(u_{k-1}), 
 \end{equation}
 with the constant $\rho_\reduce:= 1 - 2^{-1/d}\theta$. Subsituting \eqref{reduction} into \eqref{step4-stability} and taking $\veps$ sufficiently small such that 
 $\rho_\est := (1+\veps)\rho_\reduce \in (0,1)$ conclude the proof. 
%
\end{proof}

\subsection{Contraction and convergence}
Now, by Lemma \ref{lem-quasi-orth}, Lemma \ref{lem-rel} and Lemma \ref{lem-est-reduce}, we are in a position to prove the following contraction result of the quasi-error, which shows that some sum of quasi-errors on the successively two levels is contractive. 
\begin{theorem}[Contraction]\label{thm-contract}
	Let $u$ be the solution of the weak formulation \eqref{model-VP}, and $\{u_k,\eta_k(u_k)\}_{k \geq 0}$ be the sequence of the discrete solution
	 and the error estimator obtained by Algorithm \ref{alg-AFEMC}. Then, there exist constants $\rho_\ctr = \rho_\ctr(\rho_\est,C_\est,C_\rem,C_\rel,\wtC_\rel,c_\qoo,\kappa(h_0)) 
	 \in (0,1)$ and $\tau > 0$ having the same dependence as $\rho_\ctr$ such that 
	\begin{equation}\label{result-contract}
	E_{k,\sigma}^2 + \tau \kappa(h_0)^2 E_{k-1,\sigma}^2 \leq \rho_\ctr^2 
	(E_{k-1,\sigma}^2 + \tau\kappa(h_0)^2 E_{k-2,\sigma}^2), 
	\end{equation}
where $E_{k,\sigma}^2 = \anorm{u-u_k}^2 + \sigma \eta_k^2(u_k)$ with the constant $\sigma = C_\est^{-1}$. In particular, there holds 
	\begin{equation}\label{result-contract-1-0}
		E_{1,\sigma}^2 \leq \rho_0^2 E_{0,\sigma}^2, 
	\end{equation}
with the constant $\rho_0 = \rho_0(\rho_\est,C_\est,C_\rel,c_\qoo,\kappa(h_0))$.
\end{theorem}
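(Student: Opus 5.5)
The plan is to follow the standard contraction argument (Cascón–Kreuzer–Nochetto–Siebert type), now carrying the extra perturbation terms that involve $\kappa(h_0)$. Write $e_k := \anorm{u-u_k}$ and $\eta_k := \eta_k(u_k)$, and fix $\sigma = C_\est^{-1}$.

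\textbf{Step 1: combine the three lemmas.} For $k\geq 2$, I multiply the estimator reduction \eqref{result-est-reduce} by $\sigma$ and add it to the quasi-orthogonality \eqref{result-quasi-orth}. With $\sigma C_\est = 1$, the term $\anorm{u_k-u_{k-1}}^2$ cancels exactly, which gives
\begin{equation*}
E_{k,\sigma}^2 \leq \Big(\tfrac{1}{1-c_\qoo\kappa(h_0)} + \sigma C_\rem\kappa(h_0)^2\Big) e_{k-1}^2 + \sigma\rho_\est\eta_{k-1}^2 + \sigma C_\rem\kappa(h_0)^2 e_{k-2}^2 .
\end{equation*}
The coefficient of $e_{k-1}^2$ is $1+O(\kappa(h_0))$, so it is not yet below one.

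\textbf{Step 2: borrow from the estimator.} I split $\sigma\rho_\est\eta_{k-1}^2 = \sigma\eta_{k-1}^2 - \sigma(1-\rho_\est)\eta_{k-1}^2$. For the subtracted part I use reliability \eqref{result-rel} at level $k-1$, squared with Young's inequality:
\begin{equation*}
e_{k-1}^2 \leq 2C_\rel^2\eta_{k-1}^2 + 2\wtC_\rel^2\kappa(h_0)^2 e_{k-2}^2 .
\end{equation*}
For a parameter $\beta\in(0,1)$, this lets me replace $-\beta\sigma(1-\rho_\est)\eta_{k-1}^2$ by
\begin{equation*}
-\tfrac{\beta\sigma(1-\rho_\est)}{2C_\rel^2}\, e_{k-1}^2 + \tfrac{\beta\sigma(1-\rho_\est)\wtC_\rel^2}{C_\rel^2}\,\kappa(h_0)^2 e_{k-2}^2 .
\end{equation*}
The result is
\begin{equation*}
E_{k,\sigma}^2 \leq \Big(1+O(\kappa) - \tfrac{\beta\sigma(1-\rho_\est)}{2C_\rel^2}\Big)e_{k-1}^2 + \big(1-(1-\beta)(1-\rho_\est)\big)\sigma\eta_{k-1}^2 + C_3\kappa(h_0)^2 e_{k-2}^2 ,
\end{equation*}
where $C_3 = \sigma C_\rem + \beta\sigma(1-\rho_\est)\wtC_\rel^2/C_\rel^2$. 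Fixing $\beta = 1/2$, say, and taking $h_0$ small enough that the $O(\kappa)$ term is absorbed, both leading coefficients are at most some $\rho_1^2<1$. This gives
\begin{equation*}
E_{k,\sigma}^2 \leq \rho_1^2 E_{k-1,\sigma}^2 + C_3\kappa(h_0)^2 e_{k-2}^2 .
\end{equation*}

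\textbf{Step 3: absorb the lagged term.} This is the main obstacle: the remainder involves $e_{k-2}$, two levels back. This is why the statement pairs two consecutive quasi-errors. Using $e_{k-2}^2\leq E_{k-2,\sigma}^2$, I add $\tau\kappa(h_0)^2E_{k-1,\sigma}^2$ to both sides:
\begin{equation*}
E_{k,\sigma}^2 + \tau\kappa^2 E_{k-1,\sigma}^2 \leq (\rho_1^2+\tau\kappa^2)E_{k-1,\sigma}^2 + C_3\kappa^2E_{k-2,\sigma}^2 .
\end{equation*}
To get the form \eqref{result-contract}, I need $\rho_1^2+\tau\kappa^2\leq\rho_\ctr^2$ and $C_3\kappa^2\leq\rho_\ctr^2\tau\kappa^2$, that is, $C_3\leq\rho_\ctr^2\tau$. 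I would choose $\tau = 2C_3/(1+\rho_1^2)$ and $\rho_\ctr^2 = (1+\rho_1^2)/2$. The first condition then becomes $\tau\kappa(h_0)^2\leq(1-\rho_1^2)/2$, which holds once $h_0$ is small enough. One check is needed here: $\tau$ must not depend on $h_0$ in a way that blows up, and it does not, since $C_3$ and $\rho_1$ are bounded uniformly for small $h_0$.

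\textbf{Case $k=1$.} By Lemma~\ref{lem-est-reduce}, the $C_\rem$ term vanishes. Reliability \eqref{result-rel} at $k=0$ has no remainder. Quasi-orthogonality still holds at $k=1$, since $e_{0,0}=0$ and $u_0^*=u_0$, which is exactly what the proof of Lemma~\ref{lem-quasi-orth} requires. Repeating Steps 1–2 without the $e_{k-2}$ terms gives \eqref{result-contract-1-0} with $\rho_0=\rho_1$.
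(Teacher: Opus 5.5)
Your proposal is correct and follows essentially the paper's proof. It uses the same cancellation of $\anorm{u_k-u_{k-1}}^2$ via $\sigma=C_\est^{-1}$ and the same use of reliability at level $k-1$: your $\beta$-splitting is the paper's free parameter $\nu$ with $\nu=\beta(1-\rho_\est)/(2C_\rel^2)$, which gives identical coefficients, and you absorb the lagged term $\kappa(h_0)^2\anorm{u-u_{k-2}}^2$ over two levels in the same way. The only difference is cosmetic: you choose $\rho_\ctr^2=(1+\rho_1^2)/2$ and $\tau=C_3/\rho_\ctr^2$ via inequalities, whereas the paper solves $\rho^2=\rho_\ctr^2-\tau\kappa(h_0)^2$, $C_\ctr=\rho_\ctr^2\tau$ exactly, which yields the smallest admissible $\rho_\ctr$.
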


\begin{proof}For convenience, we write $\eta_k(u_k)$ to $\eta_k$. Adding the quasi-orthogonality \eqref{result-quasi-orth} and the $\sigma = C_\est^{-1}$  multiple of the estimator reduction \eqref{result-est-reduce} leads to that 
	\begin{align}
		\anorm{u-u_k}^2 + \sigma\eta_k^2
		\leq~& \Big(\dfrac{1}{1-c_\qoo \kappa(h_0)} + \sigma C_\rem\kappa(h_0)^2\Big)\anorm{u-u_{k-1}}^2 + \sigma \rho_\est\eta_{k-1}^2+ \sigma C_\rem\kappa(h_0)^2\anorm{u-u_{k-2}}^2. \label{step1-contract}
	\end{align}
Introducing a free parameter $\nu > 0$ and invoking the reliability \eqref{result-rel} yield that 
	\begin{align}
		\anorm{u-u_k}^2 + \sigma\eta_k^2
		\leq~& \Big(\dfrac{1}{1-c_\qoo \kappa(h_0)} + \dfrac{C_\rem \kappa(h_0)^2 - \nu}{C_\est}\Big)\anorm{u-u_{k-1}}^2 + \sigma (\rho_\est + 2C_\rel^2 \nu) \eta_{k-1}^2 \nonumber\\
		&+ \dfrac{C_\rem+2\nu \widetilde{C}_\rel^2}{C_\est}\kappa(h_0)^2\anorm{u-u_{k-2}}^2. \label{step2-contract}
	\end{align}
Noting that $\rho_\est \in (0,1)$, we can choose $\nu > 0$ sufficiently small such that 
	\begin{equation*}
		\rho_\est + 2C_\rel^2 \nu \in (0,1). 
	\end{equation*}
 Also, we can make $\kappa(h_0)$ sufficiently small (i.e., the initial mesh size $h_0$ is sufficiently small) such that $C_\rem \kappa(h_0)^2 < \nu$ and 
	\begin{equation*}
		\dfrac{1}{1-c_\qoo \kappa(h_0)} + \dfrac{C_\rem \kappa(h_0)^2 - \nu}{C_\est} \in (0,1). 
	\end{equation*}
Therefore, it holds 
	\begin{equation}\label{step3-contract}
	E_{k,\sigma}^2 \leq \rho^2 E_{k-1,\sigma}^2 + C_\ctr \kappa(h_0)^2 \anorm{u-u_{k-2}}^2,
	\end{equation}
where $E_{k,\sigma}^2:=\anorm{u-u_k}^2 + \sigma \eta_k^2$, and the constants 
	\begin{align}
		\rho^2 &:= \max\{\dfrac{1}{1-c_\qoo \kappa(h_0)} + \dfrac{C_\rem \kappa(h_0)^2 - \nu}{C_\est},\rho_\est + 2C_\rel^2 \nu\} \in (0,1), \nonumber\\
		C_\ctr &:= \dfrac{C_\rem+2\nu \widetilde{C}_\rel^2}{C_\est}. \nonumber
	\end{align}
In particular, for the above deduction for $k = 1$, the last term in the right-hand side of \eqref{step3-contract} will vanish due to Lemma \ref{lem-est-reduce} for $k=1$ and the constant $\rho^2$ will become 
	\begin{equation*}
		\rho_0^2 := \max\left\{\dfrac{1}{1-c_\qoo \kappa(h_0)} - C_\est^{-1}\nu,~\rho_\est + 2C_\rel^2 \nu\right\} \in (0,1).
	\end{equation*}	
In order to obtain the following contraction 
	\begin{equation*}
		E_{k,\sigma}^2 + \tau \kappa(h_0)^2 E_{k-1,\sigma}^2 \leq \rho_\ctr^2 \left(E_{k-1,\sigma}^2 + \tau\kappa(h_0)^2 E_{k-2,\sigma}^2\right), 
	\end{equation*}
where the constants $\tau > 0$ and $\rho_\ctr \in (0,1)$ will be determined, we know that 
	\begin{equation*}
		\rho^2 = \rho_\ctr^2 - \tau \kappa(h_0)^2, \quad\text{and}\quad C_\ctr = \rho_\ctr^2 \tau, 
	\end{equation*}
which implies that 
	\begin{equation*}
		\rho_\ctr^2 = \dfrac{\rho^2 + \sqrt{\rho^4 + 4C_\ctr \kappa(h_0)^2}}{2}, 
		\quad \text{and} \quad \tau = \dfrac{2C_\ctr}{\rho^2 + \sqrt{\rho^4 + 4C_\ctr \kappa(h_0)^2}}. 
	\end{equation*}
The sufficiently small $\kappa(h_0)$ can guarantee $\rho_\ctr \in (0,1)$. 
\end{proof}

\begin{remark} Although the contraction \eqref{result-contract} of the CATGFEM is a weaker result compared with the classical contraction of the SAFEM that is the quasi-error is constractive with respect to the adaptivity iteration counter $k$, it is a necessary relaxation since note that the CATGFEM is based on the efficient inexact numerical solver while the SAFEM is based on the inefficient exact solver. 

If the initial mesh size $h_0$ is sufficiently small such that 
	\begin{equation}
		\kappa(h_0)\anorm{u-u_{k-2}}^2 \leq \anorm{u-u_{k-1}}^2, \quad \forall \; k \geq 2, 
	\end{equation}
holds, then \eqref{step3-contract} will become the classical contraction 
	\begin{equation}\label{result-convergence}
		E_{k,\sigma}^2 \leq \rho_1^2 E_{k-1,\sigma}^2, 
	\end{equation}
with the constant 
	\begin{equation*}
		\rho_1^2 := \max\left\{\dfrac{1}{1-c_\qoo \kappa(h_0)} + \dfrac{C_\rem \kappa(h_0)^2 - \nu}{C_\est} + C_\ctr \kappa(h_0),~\rho_\est + 2C_\rel^2 \nu\right\} \in (0,1).
	\end{equation*}
\end{remark}
Theorem \ref{thm-contract} immediately derives the convergence of our proposed CATGFEM (Algorithm \ref{alg-AFEMC}).
	\begin{corollary}[Convergence]\label{cor-convergence}
		Under the conditions of Theorem \ref{thm-contract}, then there holds 
		\begin{equation*}
			\anorm{u-u_k}^2 + \sigma\eta_k^2(u_k) \leq \rho_\conv^{2k}E_{0,\sigma}^2\rightarrow 0 \;(k \rightarrow \infty), 
		\end{equation*}
	with the constant $\rho_\conv^{2k} = \rho_\ctr^{2k-2}\left(\rho_0^2 + \tau \kappa(h_0)^2\right)$, where the sufficiently small initial mesh size $h_0$ can make the constant $\kappa(h_0)$ sufficiently small such that 
	\begin{equation*}
		\rho_\conv = \rho_\ctr\sqrt[2k]{\frac{\rho_0^2+\tau \kappa(h_0)^2}{\rho_\ctr^2}} \in (0,1). 
	\end{equation*} 
	\end{corollary}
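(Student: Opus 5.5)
The plan is to unroll the two-level contraction \eqref{result-contract} of Theorem~\ref{thm-contract} down to level one, and then close the chain with the single-step estimate \eqref{result-contract-1-0}.

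\emph{Step 1: auxiliary quantity.} Define $F_k := E_{k,\sigma}^2 + \tau\kappa(h_0)^2 E_{k-1,\sigma}^2$ for $k \geq 1$. Then \eqref{result-contract} reads $F_k \leq \rho_\ctr^2 F_{k-1}$ for $k \geq 2$. Induction on $k$ gives
\[
E_{k,\sigma}^2 \leq F_k \leq \rho_\ctr^{2k-2} F_1 .
\]
We must check that the index range in Theorem~\ref{thm-contract} starts at $k=2$. This is exactly where Lemma~\ref{lem-est-reduce} with its $k \geq 2$ remainder was used, so $F_1$ is the correct base.

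\emph{Step 2: bound the base.} By \eqref{result-contract-1-0},
\[
F_1 = E_{1,\sigma}^2 + \tau\kappa(h_0)^2 E_{0,\sigma}^2 \leq \bigl(\rho_0^2 + \tau\kappa(h_0)^2\bigr) E_{0,\sigma}^2 .
\]
Combining this with Step 1 yields $\anorm{u-u_k}^2 + \sigma\eta_k^2(u_k) \leq \rho_\ctr^{2k-2}(\rho_0^2+\tau\kappa(h_0)^2)E_{0,\sigma}^2$, which is the claimed estimate with $\rho_\conv^{2k}$ as defined in the statement.

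\emph{Step 3: the constants.} We need $\rho_0^2 + \tau\kappa(h_0)^2 < 1$, or at least a bound independent of $k$. By the proof of Theorem~\ref{thm-contract}, $\rho_0 < 1$ holds once $\nu$ is fixed and $h_0$ is small. The quantity $\tau$ is bounded by $C_\ctr/\rho^2$ uniformly as $\kappa(h_0) \to 0$. Hence \eqref{property-kappaH} lets us take $h_0$ small enough that $\rho_0^2+\tau\kappa(h_0)^2 \leq 1$, and then also $\leq \rho_\ctr^2$ up to a constant.

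Convergence to zero then follows from $\rho_\ctr\in(0,1)$: the right-hand side is a fixed constant times $\rho_\ctr^{2k-2}$. The formula $\rho_\conv = \rho_\ctr\,(\,(\rho_0^2+\tau\kappa(h_0)^2)/\rho_\ctr^2)^{1/(2k)}$ is just the algebraic rewriting. Its membership in $(0,1)$ follows once $\rho_0^2+\tau\kappa(h_0)^2 < 1$: if this ratio is at most $1$ then $\rho_\conv \leq \rho_\ctr$, and otherwise its $2k$-th root tends to $1$ and is bounded by $\rho_\ctr^{-1/k}$, giving $\rho_\conv \leq \rho_\ctr^{1-1/k} < 1$ for $k \geq 2$.

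The main obstacle is this last bookkeeping on the constants, namely ensuring a $k$-independent smallness condition on $h_0$. Everything else is a direct induction.
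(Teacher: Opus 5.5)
Your proposal is correct and is the unrolling argument the paper intends when it says the corollary follows immediately from Theorem~\ref{thm-contract}: you telescope $F_k\le\rho_\ctr^2F_{k-1}$ down to $F_1$ and bound $F_1$ using \eqref{result-contract-1-0}, which is exactly what produces $\rho_\ctr^{2k-2}(\rho_0^2+\tau\kappa(h_0)^2)$. The case analysis in your Step 3 can be dropped, since the proof of Theorem~\ref{thm-contract} gives $\rho_0^2\le\rho^2$ and $\tau\kappa(h_0)^2=\rho_\ctr^2-\rho^2$, hence $\rho_0^2+\tau\kappa(h_0)^2\le\rho_\ctr^2$ and $\rho_\conv\le\rho_\ctr<1$ for every $k\geq 1$, including $k=1$.
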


	\begin{remark}In \cite[Theorem 3.3]{LiYkZhangY21:A908}, the convergence of the ATGFEM reads in our settings: there exist constants $\tilde{\sigma} > 0$ and $0 < \tilde{\rho} < 1$ such that 
		\begin{align}
			\anorm{u-\tu_k}^2 + \tilde{\sigma}\tilde{\eta}_k^2(\hu_k) 
			\leq ~& \tilde{\rho}^{2k}\left(\anorm{u-\tu_0}^2+\tilde{\sigma}\tilde{\eta}_k^2(\tu_k)\right) + C\sum_{i=1}^{k-1}\|u-\tu_{k-1-i}\|_0^2 \tilde{\rho}^{2i}
			 + C\sum_{i=1}^{k-1}\|u-\tu_{k-2-i}\|_0^2 \tilde{\rho}^{2i}, \label{result-convergence-ATGFEM}
		\end{align}
where the two sums in the right-hand side of \eqref{result-convergence-ATGFEM} are only empirically regarded as higher-order quantities, which is not theoretically justified. Therefore, our convergence result \eqref{result-convergence} is better and more rigorous than \eqref{result-convergence-ATGFEM} due to no remainder terms. 
This is essentially attributed to the key $L^2$-lifting property from Lemma \ref{lem-L2-lifting}, which allows us to rigorously prove the contraction result \eqref{result-contract} also without remainder terms. 
	\end{remark}

\section{Numerical experiments}\label{sect-test}
In this section, we show the effectiveness and robustness of our proposed CATGFEM (Algorithm \ref{alg-AFEMC}) with several numerical examples. In all numerical examples the linear FEM is considered.  

In the first example,  we compare the convergence behaviors of the SAFEM, the ATGFEM and the CATGFEM with the fixed marking parameter, and of the ATGFEM as well as the CATGFEM with different marking parameters. 

\begin{example}\label{test_peak-fracalpha}	
	We consider the unit square domain $\Omega = (0, 1)\times (0, 1)$ in model problem \eqref{model-1}, and select the coefficients 
		\begin{equation*}
			\valpha =  \left[\begin{array}{cc}
				1 & 0\\
				0 & 1
			\end{array}\right]=\mbI,
			\quad 
			\vbeta = \left[\begin{array}{c}
				1 \\
				1
			\end{array}\right], 
			\quad
			\gamma = -20,   
		\end{equation*}
		and the analytical solution 
		\begin{equation*}
			u = \Big(x^2 + y^2\Big)^{1/5} + \Big((x-1)^2+(y-1)^2\Big)^{1/5}, 
		\end{equation*}
all of which determine the source term $f$. Furthermore, the uniform initial mesh size $h_0$ is chosen to be $1/10$. 
\end{example}

 Figure \ref{fig_test_peak-fracalpha_algo-STD-ATG-CATG_err_mark-0.3_rct--20} displays the convergence history on the energy-norm errors of the SAFEM, the ATGFEM and the CATGFEM with the fixed marking parameter $\theta = 0.3$. Figure \ref{fig_test_peak-fracalpha_rct--20_mark_algo-ATG-CATG_err}  depicts the corresponding results of the ATGFEM and the CATGFEM with different marking parameters. 
 
 From Figure \ref{fig_test_peak-fracalpha_algo-STD-ATG-CATG_err_mark-0.3_rct--20}, we see that the error of the CATGFEM exhibits the optimal convergence, and is almost equal to that of the SAFEM at the same number of elements. However, the error of the ATGFEM nearly remain unchanged from some number of elements. 
 The above observations reflect that the CATGFEM performs well like the SAFEM but the ATGFEM does not work in this example.

\begin{figure}[!ht]
	\centering
		\includegraphics[scale=0.45]{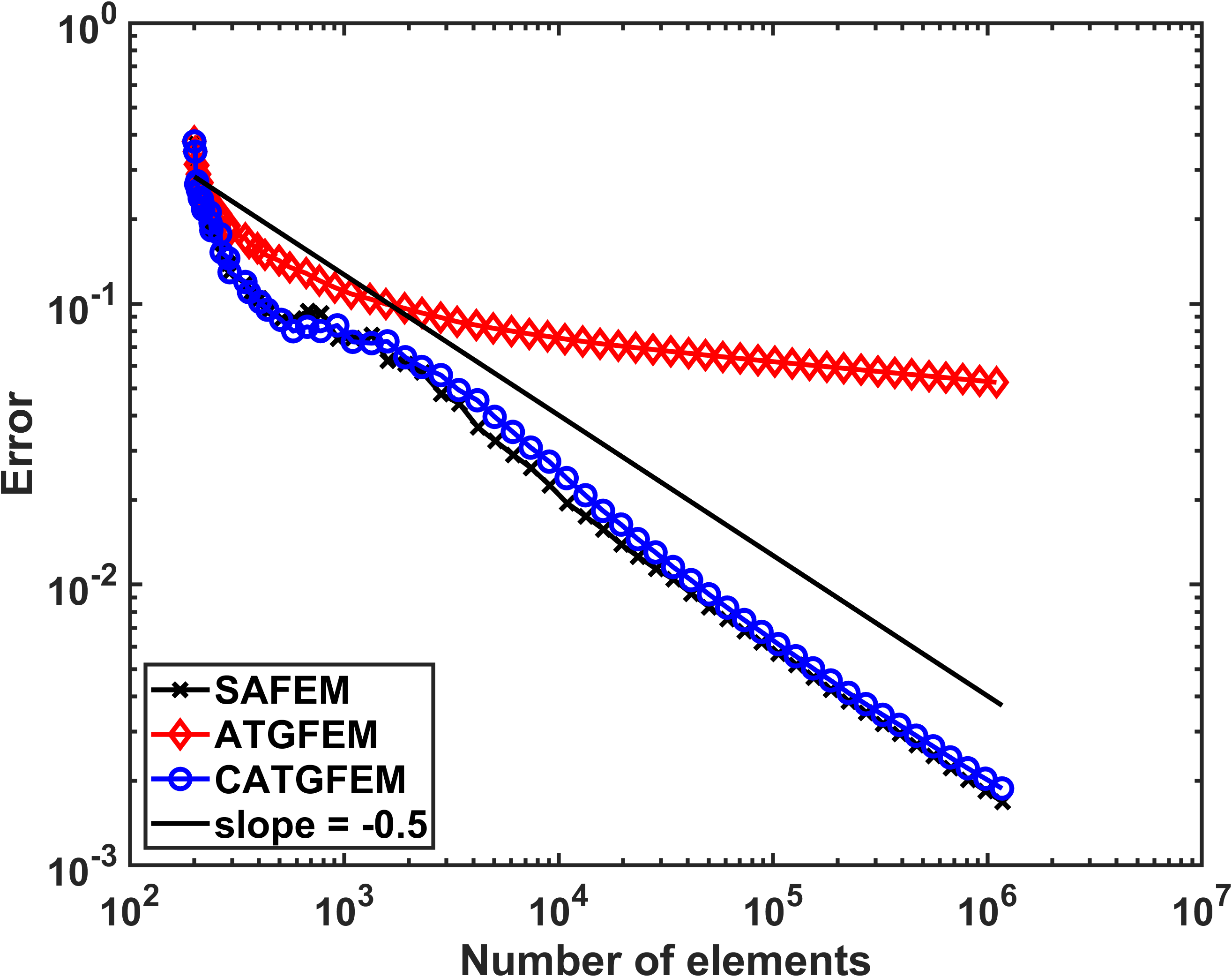}
	\caption{Convergence history on the energy-norm errors of SAFEM, ATGFEM and CATGFEM with marking parameter $\theta = 0.3$ in Example \ref{test_peak-fracalpha}.}\label{fig_test_peak-fracalpha_algo-STD-ATG-CATG_err_mark-0.3_rct--20}
\end{figure}

\begin{figure}[!ht]
	\centering
	\subfigure[ATGFEM]{
		\includegraphics[scale=0.32]{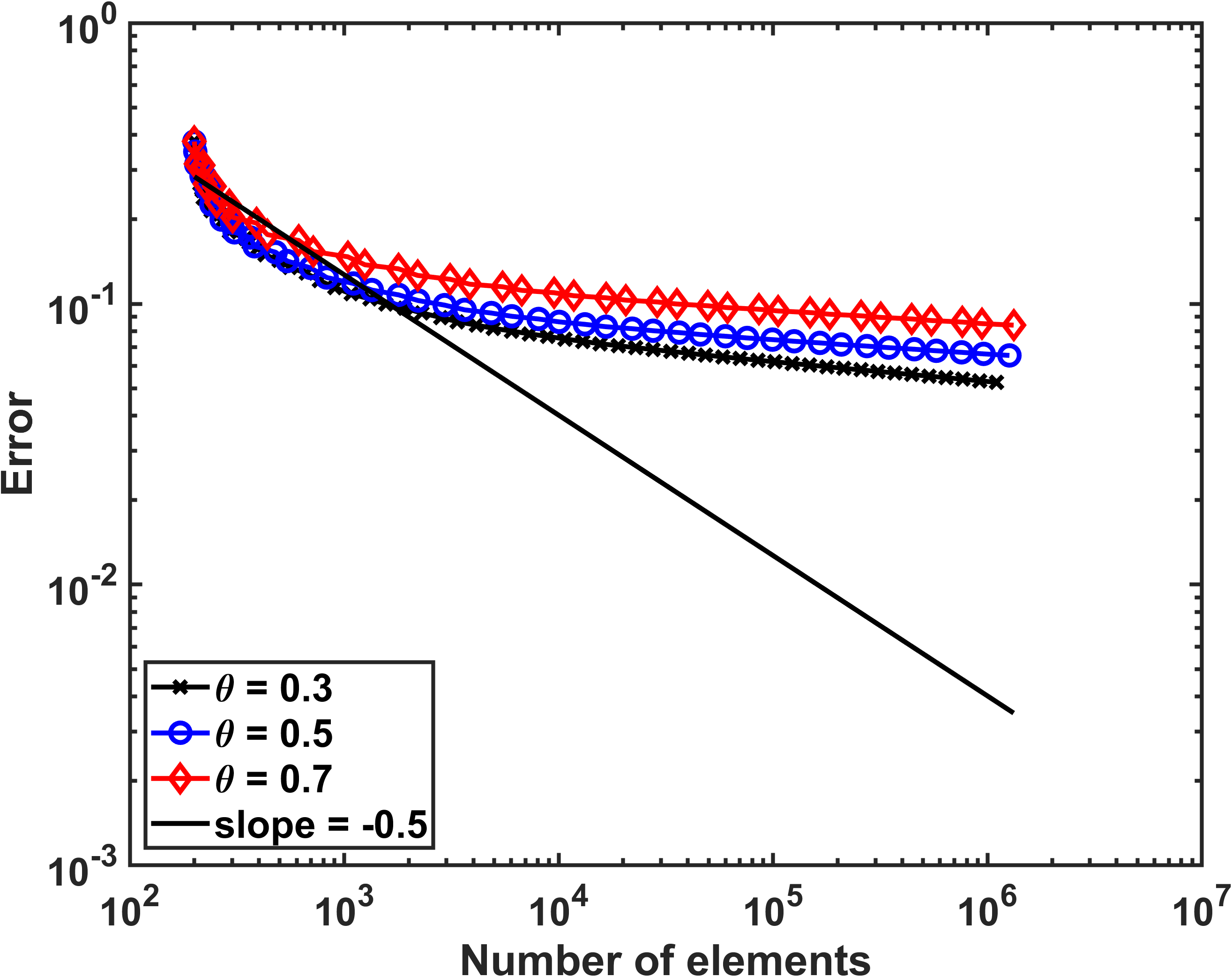}
	}
	\subfigure[CATGFEM]{
		\includegraphics[scale=0.32]{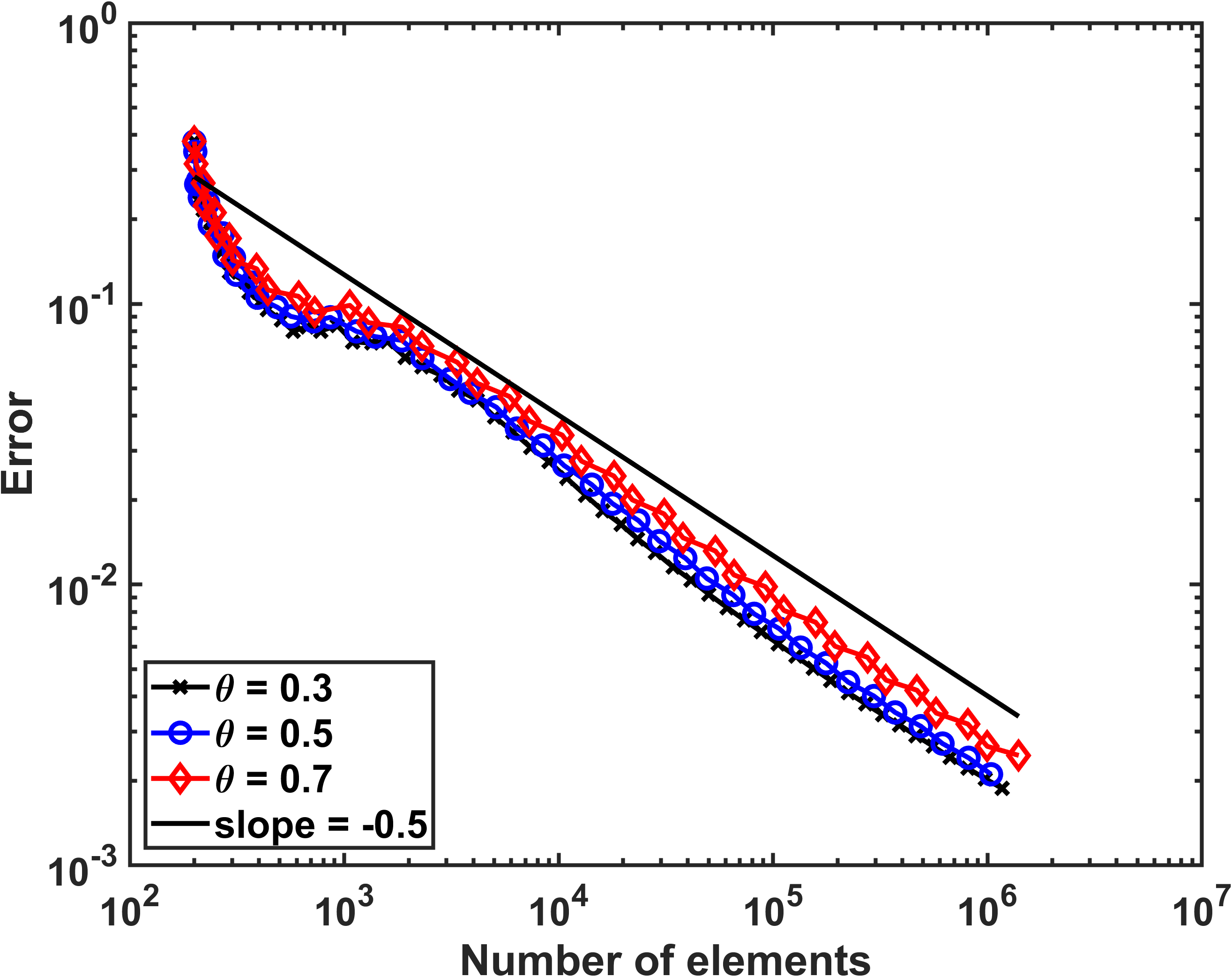}
	}
	\caption{Convergence history on the energy-norm errors of ATGFEM and CATGFEM with different marking parameters in Example \ref{test_peak-fracalpha}.}\label{fig_test_peak-fracalpha_rct--20_mark_algo-ATG-CATG_err}
\end{figure}

From Figure \ref{fig_test_peak-fracalpha_rct--20_mark_algo-ATG-CATG_err}-(a), we observe that the errors of the ATGFEM hardly change from some number of elements for all chosen marking parameters, while from Figure \ref{fig_test_peak-fracalpha_rct--20_mark_algo-ATG-CATG_err}-(b), we see that the errors of the CATGFEM are all optimally convergent, and that the error of the CATGFEM with $\theta = 0.7$ is a little larger than the identical ones with other values of $\theta$ at the same number of elements. Those phenomena indicate that the CATGFEM is more robust on the marking parameter than ATGFEM in this example.

The next example will investigate the behaviors of the ATGFEM and CATGFEM for the small diffusion coefficient $\valpha$. 

\begin{example}\label{test_Lshape-smalldfs}	
	We consider the L-shaped domain $\Omega = (-1, 1)\times (-1, 1) \backslash [0, 1)\times (-1, 0]$ in \eqref{model-1}, and select coefficients 
		\begin{equation*}
			\valpha = 0.1\mbI,
			\quad 
			\vbeta = \left[\begin{array}{c}
				r \\
				r
			\end{array}\right], 
			\quad
			\gamma = -1,   
		\end{equation*}
		and the source term $f$ determined by the following analytical solution 
		\begin{equation*}
			u = r^{2/3}\sin \frac{2\xi}{3}, \quad r = \sqrt{x^2 + y^2}, \quad 
			\xi = \arctan \frac{y}{x},
		\end{equation*}
One can note that this example is a variant of \cite[Example 6.1]{ChenHXXuXJ10:4492}.
\end{example}

We test the performance of the ATGFEM for a small marking parameter $\theta = 0.3$ and a very fine uniform initial mesh with $h_0 = 1/100$. Figure \ref{fig_Lshape_algo-compare_err}-(a) gives the corresponding result, which demonstrates that the energy-norm error of the ATGFEM is divergent from some number of elements, so we can assert that the ATGFEM does not work when the marking parameter $\theta$ is bigger than $0.3$ and the initial mesh size $h_0$ is larger than $1/100$ in this example.

\begin{figure}[!ht]
	\centering
	\subfigure[]{
		\includegraphics[scale=0.32]{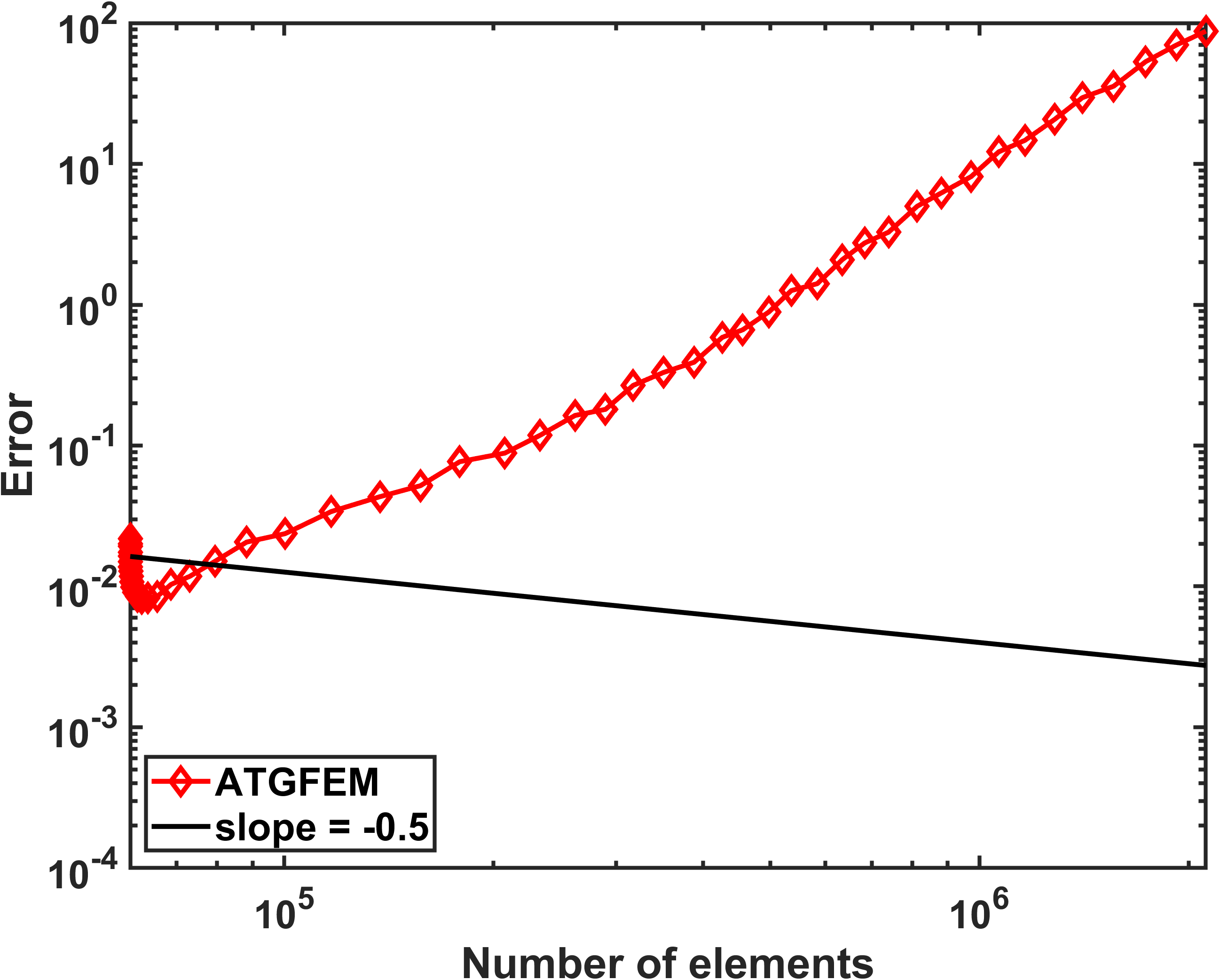}
	}
	\subfigure[]{
		\includegraphics[scale=0.32]{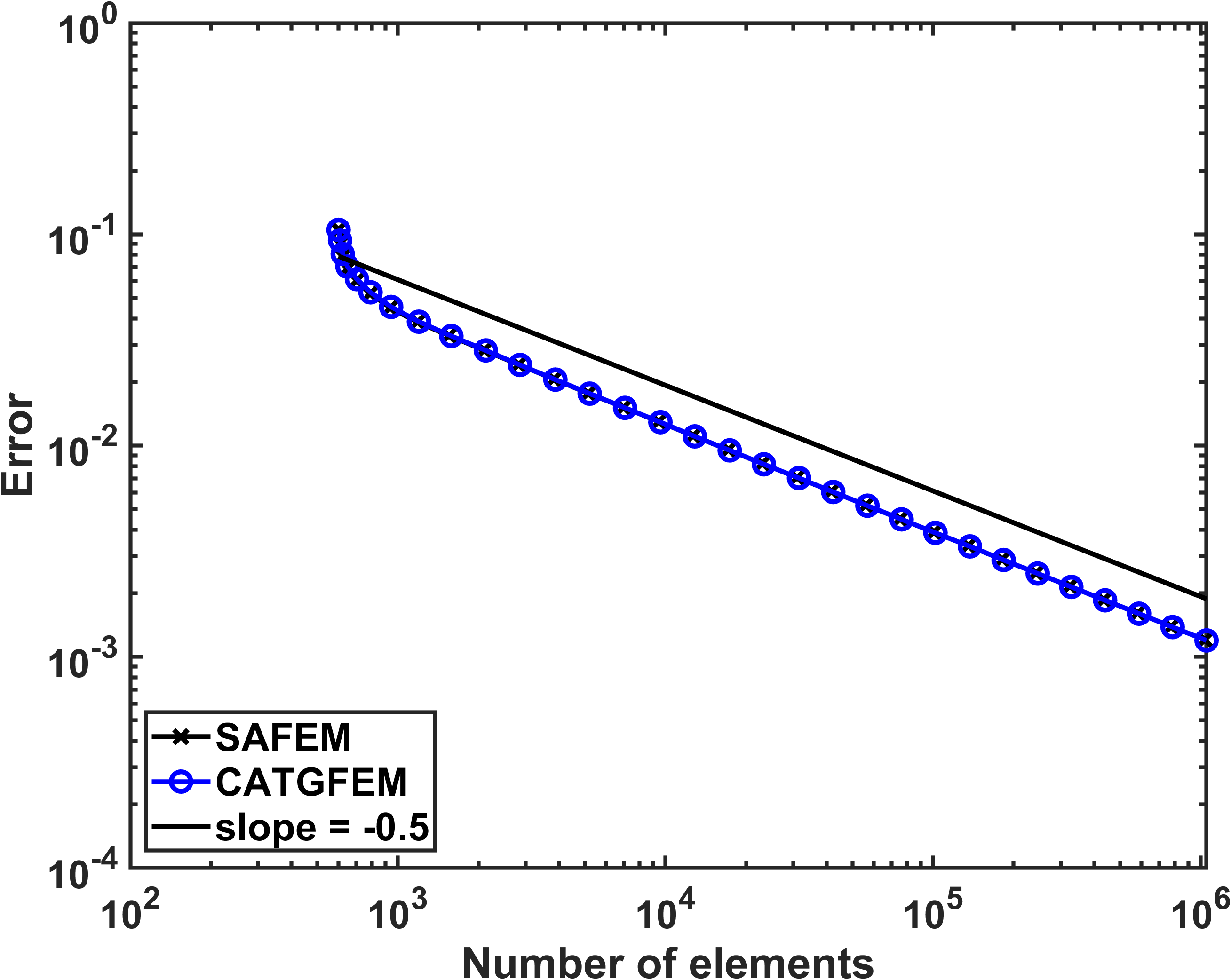}
	}
	\caption{Performances of the energy-norm errors of tested three adaptive algorithms in Example \ref{test_Lshape-smalldfs}: (a) ATGFEM with $\theta = 0.3$ and $h_0 = 1/100$; (b) SAFEM and CATGFEM with $\theta = 0.5$ and $h_0 = 1/10$.}\label{fig_Lshape_algo-compare_err}
\end{figure}

\begin{figure}[!ht]
	\centering
		\includegraphics[scale=0.45]{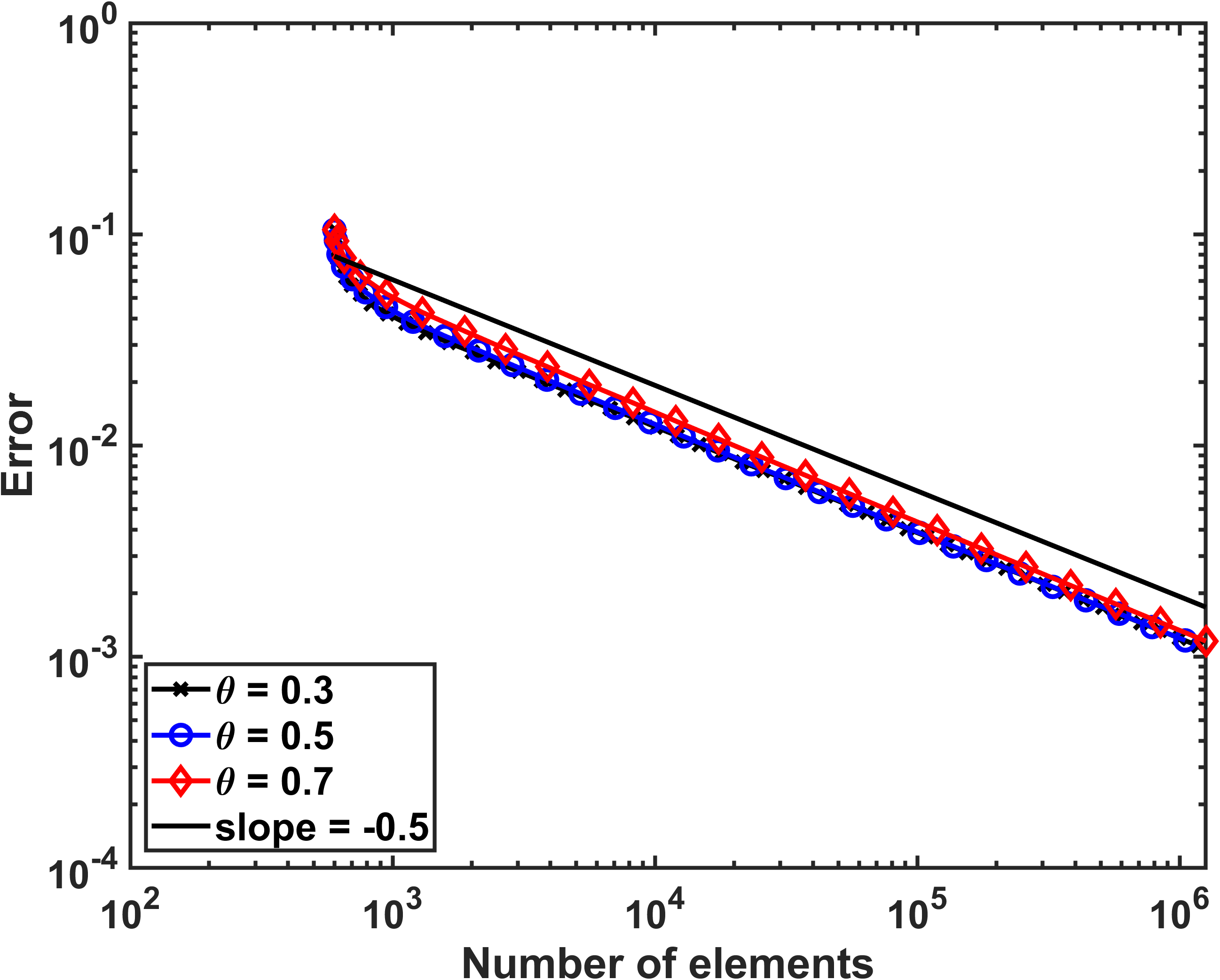}
	\caption{Convergence history on the energy-norm errors of CATGFEM with different marking parameters under $h_0 = 1/10$ in Example \ref{test_Lshape-smalldfs}.}\label{fig_Lshape_CATG_mark-compare_err}
\end{figure}

In contrast, the behaviors of the energy-norm errors of the SAFEM and the CATGFEM for a relatively big marking parameter $\theta = 0.5$ and a relatively coarse uniform initial mesh with $h_0 = 1/10$ are shown in Figure \ref{fig_Lshape_algo-compare_err}-(b), from which we see that the CATGFEM behaves well like the SAFEM, namely, their errors are both identical and converge at the optimal rate.

We next show the convergence performances of the energy-norm errors of the CATGFEM under $h_0 = 1/10$ with different marking parameters $\theta$ in Figure \ref{fig_Lshape_CATG_mark-compare_err}, which indicates that the CATGFEM has the great robustness on the marking parameter $\theta$ since its errors being nearly identical for all values of $\theta$ exhibit the optimal convergence.

The final example will test behaviors of the ATGFEM and the CATGFEM for a convection-dominated convection-diffusion problem. 

	\begin{example}\label{test_cvt-domin}	
	As in \cite[Example 6.2]{ChenHXXuXJ10:4492}, we select the unit square domain $\Omega = (0,1)^2$, the coefficients and the source term 
		\begin{equation*}
			\valpha =0.006\mbI,
			\quad 
			\vbeta = \left[\begin{array}{c}
				y \\
				0.6-x
			\end{array}\right], 
			\quad
			\gamma = 0, \quad\text{and}\quad f = 0,   
		\end{equation*}
in \eqref{model-1}, where the analytical solution is unknown but satisfies the following Dirichlet boundary condition
$$
u(x, y)= \begin{cases}1, & [0.3+\tau,\;0.6-\tau]\times \{0\}, \\ 
	0, & \partial \Omega \backslash [0.3,\;0.6] \times \{0\}, \\ 
	\text { linear, } & [0.3,\;0.3+\tau]\times\{0\} \text { or }[0.6-\tau, \; 0.6] \times \{0\}, \end{cases}
$$
with a parameter $\tau = 0.003$.
\end{example}

We choose a very fine uniform initial mesh with $h_0 = 1/256$ and a small marking parameter $\theta = 0.3$ in Example \ref{test_cvt-domin} to test the behavior of the ATGFEM. Figure \ref{fig_cvt-domin_algos}-(a) shows the corresponding result. from which we observe that the energy-norm error of the ATGFEM is also increasing from some number of elements like Figure \ref{fig_Lshape_algo-compare_err}-(a). Therefore, the ATGFEM does not work if the initial mesh size $h_0 \geq 1/256$ and the marking parameter $\theta \geq 0.3$. 

\begin{figure}[!ht]
	\centering
	\subfigure[]{
		\includegraphics[scale=0.32]{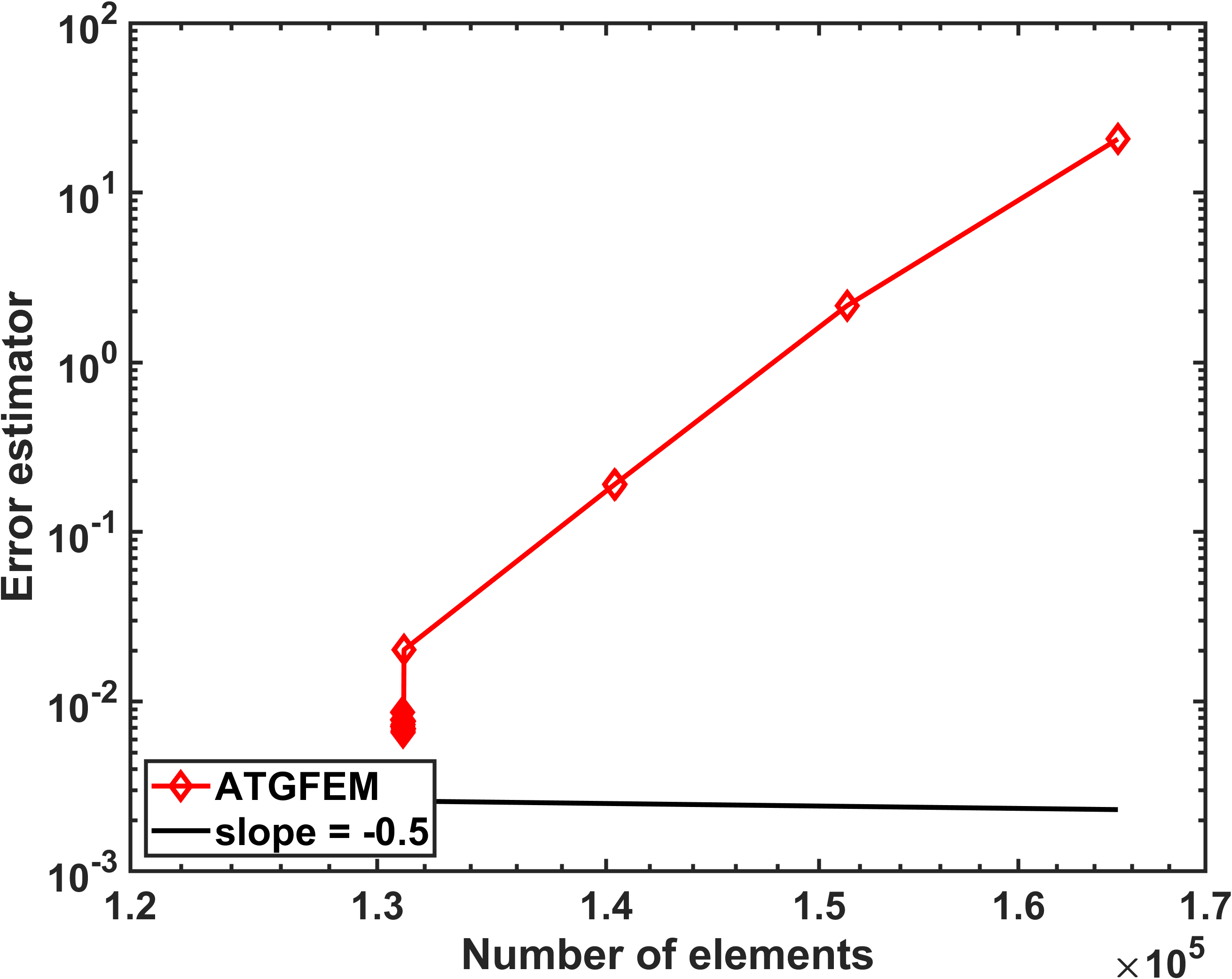}
	}
	\subfigure[]{
		\includegraphics[scale=0.32]{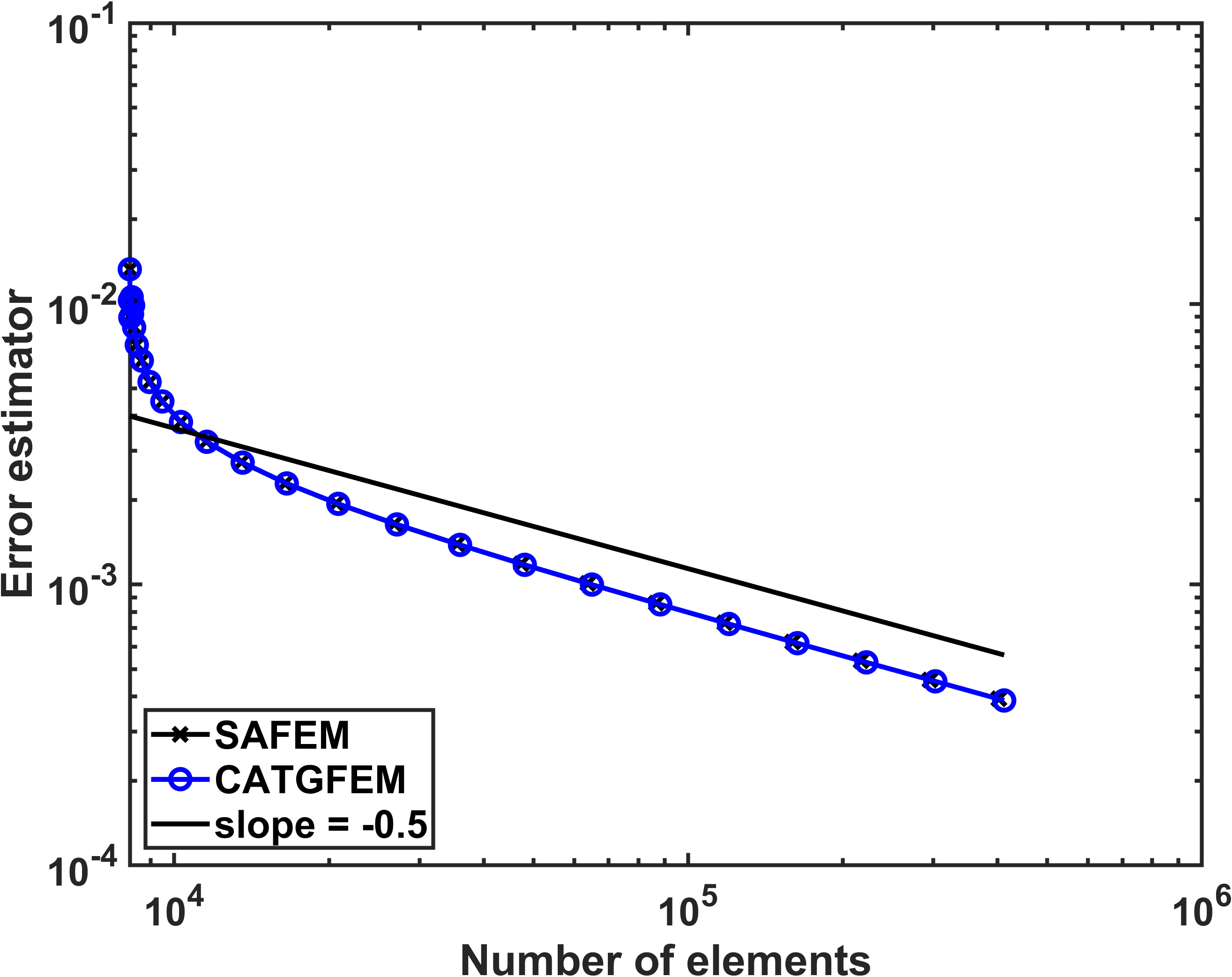}
	}
	\caption{Behaviors of the energy-norm errors of tested three adaptive algorithms in Example \ref{test_cvt-domin}: (a) ATGFEM with $h_0 = 1/256$ and $\theta = 0.3$; (b) SAFEM and CATGFEM with $h_0 = 1/64$ and $\theta = 0.5$.}\label{fig_cvt-domin_algos}
\end{figure}

In contrast, the behaviors of the energy-norm errors of the SAFEM and the CATGFEM are suggested in Figure \ref{fig_cvt-domin_algos}-(b) when choosing a relatively coarse uniform initial mesh with $h_0 = 1/64$ and a relatively big marking parameter $\theta = 0.5$, we see from Figure \ref{fig_cvt-domin_algos}-(b) that the CATGFEM also performs well like the SAFEM, as seen in Figure \ref{fig_Lshape_algo-compare_err}-(b). 

We then pictures the convergence behaviors of the energy-norm errors of the CATGFEM under $h_0 = 1/64$ with different marking parameters in Figure \ref{fig_cvt-domin_CATG}, which illustrates that the CATGFEM also has the excellent robustness on the marking parameter for the convection-dominated problem of this example. 

\begin{figure}[!ht]
	\centering
		\includegraphics[scale=0.45]{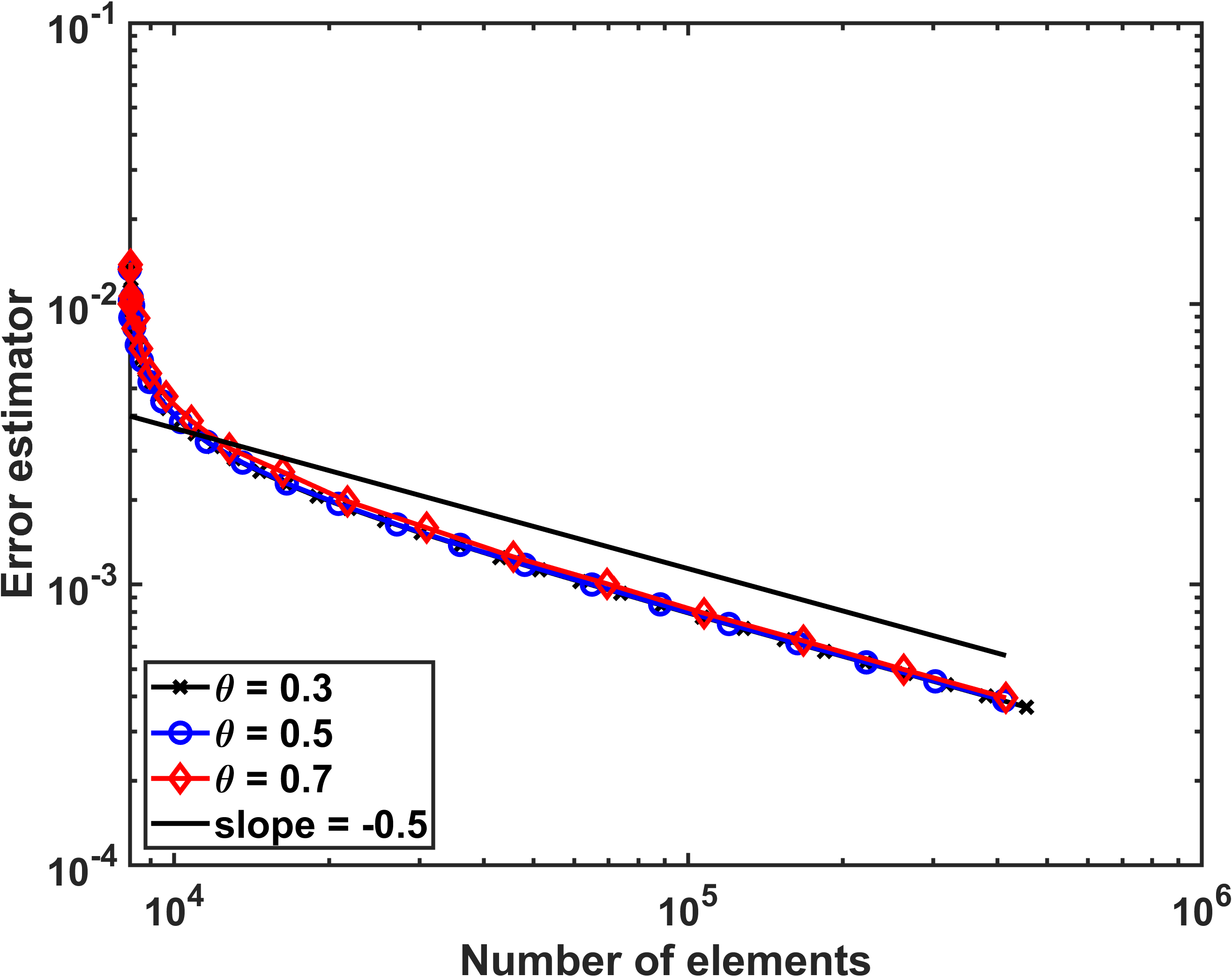}
	\caption{Convergence history on the energy-norm errors of CATGFEM with different marking parameters under $h_0 = 1/64$ in Example \ref{test_cvt-domin}.}\label{fig_cvt-domin_CATG}
\end{figure}

\section*{Acknowledgments}
The first author is supported by Scientific Research Innovation Project of Lingnan Normal University (LT2401) and High-Level Talents' Special Project of Lingnan Normal University (ZL2506). The third author is supported by the National Natural Science Foundation of China (12071160).

\bibliographystyle{abbrv}
\bibliography{Math}

@Article{ZhongLQXuanY22:113903,
  author       = {Zhong, Liu Qiang and Xuan, Yue and Cui, Jin Tao},
  journal      = {J. Comput. Appl. Math.},
  title        = {Two-grid discontinuous {G}alerkin method for convection--diffusion--reaction equations},
  year         = {2022},
  pages        = {113903},
  volume       = {404},
  creationdate = {2024-09-24T08:46:43},
}

@Article{AbdulleSouza22:110894,
  author       = {Abdulle, Assyr and Rosilho de Souza, Giacomo},
  journal      = {J. Comput. Phys.},
  title        = {A local adaptive discontinuous {G}alerkin method for convection-diffusion-reaction equations},
  year         = {2022},
  pages        = {110894},
  volume       = {451},
  creationdate = {2024-09-24T09:59:47},
}

@Article{DuanSYWuHJ23:21,
  author       = {Duan, Song Yao and Wu, Hai Jun},
  journal      = {J. Sci. Comput.},
  title        = {Adaptive {FEM} for Helmholtz equation with large wavenumber},
  year         = {2023},
  number       = {1},
  pages        = {21},
  volume       = {94},
  creationdate = {2024-09-24T10:04:11},
}

@Article{IrimieBouillard01:4027,
  author       = {Irimie, Simona and Bouillard, Ph},
  journal      = {Comput. Methods Appl. Mech. Engrg.},
  title        = {A residual a posteriori error estimator for the finite element solution of the Helmholtz equation},
  year         = {2001},
  number       = {31},
  pages        = {4027--4042},
  volume       = {190},
  creationdate = {2024-09-24T10:10:40},
}

@Article{DuYWuHJ15:782,
  author       = {Du, Yu and Wu, Hai Jun},
  journal      = {SIAM J. Numer. Anal.},
  title        = {Preasymptotic error analysis of higher order {FEM} and {CIP}-{FEM} for Helmholtz equation with high wave number},
  year         = {2015},
  number       = {2},
  pages        = {782--804},
  volume       = {53},
  creationdate = {2024-09-24T10:21:03},
}

@Book{Grisvard92Book,
  author       = {Grisvard, Pierre},
  title        = {Singularities in boundary value problems},
  year         = {1992},
  publisher    = {Springer-Verlag, Berlin},
  creationdate = {2024-09-24T10:30:03},
}

@Article{Mitchell13:350,
  author       = {Mitchell, William F},
  journal      = {Appl. Math. Comput.},
  title        = {A collection of 2{D} elliptic problems for testing adaptive grid refinement algorithms},
  year         = {2013},
  pages        = {350--364},
  volume       = {220},
  creationdate = {2024-09-24T10:32:43},
}

@Book{verfurth13Book,
  author       = {Verf{\"u}rth, R{\"u}diger},
  publisher    = {Oxford University Press, Oxford},
  title        = {A posteriori error estimation techniques for finite element methods},
  year         = {2013},
  creationdate = {2024-09-24T10:37:06},
}

@Article{BespalovHaberl17:318,
  author       = {Bespalov, Alex and Haberl, Alexander and Praetorius, Dirk},
  journal      = {Comput. Methods Appl. Mech. Engrg.},
  title        = {Adaptive {FEM} with coarse initial mesh guarantees optimal convergence rates for compactly perturbed elliptic problems},
  year         = {2017},
  pages        = {318--340},
  volume       = {317},
  creationdate = {2024-09-24T10:38:43},
}

@Article{FeischlFuhrer14:601,
  author       = {Feischl, M and F{\"u}hrer, T and Praetorius, D},
  journal      = {SIAM J. Numer. Anal.},
  title        = {Adaptive {FEM} with optimal convergence rates for a certain class of non-symmetric and possibly non-linear problems},
  year         = {2014},
  number       = {2},
  pages        = {601-625},
  volume       = {52},
  creationdate = {2024-09-24T10:51:01},
  doi          = {10.1137/120897225},
}

@Article{LiYKZhangY21:A908,
  author       = {Li, Yu Kun and Zhang, Yi},
  journal      = {SIAM J. Sci. Comput.},
  title        = {Analysis of adaptive two-grid finite element algorithms for linear and nonlinear problems},
  year         = {2021},
  number       = {2},
  pages        = {A908--A928},
  volume       = {43},
  creationdate = {2024-09-24T10:53:51},
}

@Book{AinsworthOden00Book,
  title     = {A posteriori error estimation in finite element analysis},
  publisher = {Wiley-Interscience, New York},
  year      = {2000},
  author    = {Ainsworth, Mark and Oden, J Tinsley},
}

@ARTICLE{ChenHXXuXJ10:4492,
  author = {Chen, Huang Xin and Xu, Xue Jun},
  title = {Local multilevel methods for adaptive finite element methods for
	nonsymmetric and indefinite elliptic boundary value problems},
  journal = {SIAM J. Numer. Anal.},
  year = {2010},
  volume = {47},
  pages = {4492-4516},
  number = {6}
}

@BOOK{Ciarlet02Book,
  title = {The finite element method for elliptic problems},
  publisher = {SIAM, Philadelphia},
  year = {2002},
  author = {Ciarlet, Philippe G}
}

@ARTICLE{DuSHXieXP15:1327,
  author = {Du, Shao Hong and Xie, Xiao Ping},
  title = {Convergence of an adaptive mixed finite element method for convection-diffusion-reaction
	equations},
  journal = {Sci. China Math.},
  year = {2015},
  volume = {58},
  pages = {1327-1348},
  number = {6}
}

@ARTICLE{Dorfler96:1106,
  author = {D{\"o}rfler, Willy},
  title = {A convergent adaptive algorithm for {Poisson}'s equation},
  journal = {SIAM J. Numer. Anal.},
  year = {1996},
  volume = {33},
  pages = {1106-1124},
  number = {3}
}

@BOOK{McCormick87Book,
  title = {Multigrid methods},
  publisher = {SIAM, Philadelphia},
  year = {1987},
  author = {McCormick, Stephen F.}
}

@ARTICLE{MekchayNochetto05:1803,
  author = {Mekchay, Khamron and Nochetto, Ricardo H},
  title = {Convergence of adaptive finite element methods for general second order linear elliptic {PDEs}},
  journal = {SIAM J. Numer. Anal.},
  year = {2005},
  volume = {43},
  pages = {1803-1827},
  number = {5},
  url = {http://www.jstor.org/stable/4101295}
}

@INBOOK{NochettoSiebert09:409,
  title = {Theory of adaptive finite element methods: an introduction},
  publisher = {Springer, Berlin},
  year = {2009},
  author = {Nochetto, Ricardo H. and Siebert, Kunibert G. and Veeser,Andrea},
  booktitle = {Multiscale, Nonlinear and Adaptive Approximation}
}

@ARTICLE{SchatzWang96:19,
  author = {Schatz, Alfred H and Wang, Jun Ping},
  title = {Some new error estimates for {Ritz-Galerkin} methods with minimal regularity assumptions},
  journal = {Math. Comp.},
  year = {1996},
  volume = {65},
  pages = {19-27},
  number = {213}
}

@ARTICLE{Schatz74:959,
  author = {Schatz, Alfred H},
  title = {An observation concerning {Ritz-Galerkin} methods with indefinite bilinear forms},
  journal = {Math. Comp},
  year = {1974},
  volume = {28},
  pages = {959-962},
  number = {128}
}

@ARTICLE{ScottZhang90:483,
  author = {Scott, L Ridgway and Zhang, Shang You},
  title = {Finite element interpolation of nonsmooth functions satisfying boundary conditions},
  journal = {Math. Comp.},
  year = {1990},
  volume = {54},
  pages = {483-493},
  number = {190},
  url = {http://www.jstor.org/stable/2008497}
}

@ARTICLE{Stevenson08:227,
  author = {Stevenson, Rob},
  title = {The completion of locally refined simplicial partitions created by bisection},
  journal = {Math. Comp.},
  year = {2008},
  volume = {77},
  pages = {227-241},
  number = {261}
}

@BOOK{Verfurth96Book,
  title = {A review of a posteriori error estimation and adaptive mesh-refinement techniques},
  publisher = {Wiley-Teubner, Chichester-Stuttgart},
  year = {1996},
  author = {Verf{\"u}rth, R{\"u}diger}
}

@Article{XuJC96:1759,
  author    = {Xu, Jin Chao},
  journal   = {SIAM J. Numer. Anal.},
  pages     = {1759-1777},
  title     = {Two-grid discretization techniques for linear and nonlinear {PDEs}},
  volume    = {33},
  year      = {1996},
  doi       = {10.1137/S0036142992232949},
  number    = {5},
  url       = {http://epubs.siam.org/doi/abs/10.1137/S0036142992232949},
}

@Article{XuJC92:303,
  author           = {Xu, Jin Chao},
  journal          = {SIAM J. Numer. Anal.},
  title            = {A new class of iterative methods for nonselfadjoint or indefinite problems},
  year             = {1992},
  number           = {2},
  pages            = {303-319},
  volume           = {29},
  doi              = {10.1137/0729020},
  modificationdate = {2023-08-29T22:22:26},
  url              = {http://epubs.siam.org/doi/abs/10.1137/0729020},
}

@Article{BiCJWangC18:23,
  author    = {Bi, Chun Jia and Wang, Cheng and Lin, Yan Ping},
  journal   = {J. Sci. Comput.},
  pages     = {23-48},
  title     = {A posteriori error estimates of two-grid finite element methods for nonlinear elliptic problems},
  volume    = {74},
  year      = {2018},
  number    = {1},
}

\end{document}